\newtheorem{example}[theorem]{Example}
\newcommand{\al}{\mathbb{\alpha}}
\newcommand{\diam}{\diamondsuit_{\alpha}}
\newcommand{\N}{\mathbb{N}}
\newcommand{\R}{\mathbb{R}}
\newcommand{\rar}{\mbox{$\rightarrow$}}
\newcommand{\T}{\mathbb{T}}
\newcommand{\tr}{\triangle}
\newcommand{\Z}{\mathbb{Z}}
\begin{document}


\title{Diamond-alpha Polynomial Series on Time
Scales\thanks{Accepted to the 8th Portuguese Conference 
on Automatic Control -- CONTROLO'2008, 21 to 23 July 2008, 
UTAD University, Vila Real, Portugal.}}

\author{\textbf{Dorota Mozyrska$^1$ and Delfim F. M. Torres$^2$}}

\date{$^1$Faculty of Computer Science \\ Department of Mathematics\\
Bia{\l}ystok Technical University\\
Wiejska 45A, Bia{\l}ystok 15-351, Poland\\
admoz@w.tkb.pl\\[0.3cm]
$^2$Department of Mathematics\\
University of Aveiro\\
3810-193 Aveiro, Portugal\\
delfim@ua.pt}

\maketitle


\begin{abstract}
\noindent \emph{The objective of this paper is twofold: (i) to survey existing
results of generalized polynomials on time scales, covering
definitions and properties for both delta and nabla derivatives;
(ii) to extend previous results by using the more general notion
of diamond-alpha derivative on time scales. We introduce a new
notion of combined-polynomial series on a time scale, as a convex
linear combination of delta and nabla generalized series. Main
results are formulated for homogenous time scales.  As an example, we compute
diamond-alpha derivatives on time scales for delta and nabla
exponential functions.
}

\medskip

\noindent\textbf{Keywords:} time scales, diamond-$\alpha$ derivatives, generalized
polynomials, generalized series.

\medskip

\noindent\textbf{2000 Mathematics Subject Classification:} 40C99, 39A13, 40A30.
\end{abstract}


\section{Introduction}

Polynomial series are of great importance in control theory. Both
continuous and discrete polynomial series are useful in
approximating state and/or control variables, in modal reduction,
optimal control, and system identification, providing effective
and efficient computational methods \cite{Jacobsohn,Para,Sadek}.

From recent years, the theory of control for discrete and
continuous time is being unified and extended by using the
formalism of time scales: see \cite{Zbig07,Zbig06} and references
therein. Looking to the literature on time scales, one understands
that such unification and extension is not unique. Two main
directions are being followed: one uses $\Delta$-derivatives while
the other chooses $\nabla$-derivatives instead. In this paper we
adopt the more general notion of diamond-$\alpha$ derivative
\cite{SFHD}, and give the first steps on a correspondent theory of
polynomial series. As particular cases, for $\alpha=1$ we get
$\Delta$-polynomial series; when $\alpha=0$ we obtain
$\nabla$-series. By choosing the time scale to be the real
(integer) numbers, we obtain the classical continuous (discrete)
polynomial series.

Diamond-alpha derivatives have shown in computational experiments
to provide efficient and balanced approximation formulas, leading
to the design of more reliable numerical methods
\cite{Sheng1,SFHD}. We claim that the combined dynamic polynomial
series here introduced are useful in control applications.



\section{The $\Delta$, $\nabla$, and $\diam$ calculus}

Here we give only very short introduction (with basic definitions)
on three types of calculus on time scales. For more information we
refer the reader to \cite{AB,And,Bh,Ozkan,Sheng,SFHD}.

By a time scale, here denoted by $\T$, we mean a nonempty closed
subset of $\R$. As the theory of time scales give a way to unify
continuous and discrete analysis, the standard cases of time
scales are $\T=\R$, $\T=\Z$, or $\T=c\Z$, $c>0$.

For $t\in\T$, the forward jump operator $\sigma$ and the
graininess function $\mu$ are defined by
$\sigma(t)=\inf\{s\in\T:s>t\}$ and $\sigma(\sup\T)=\sup\T$ if
$\sup\T<+\infty$; $\mu(t)=\sigma(t)-t$.  Moreover, we have the backward
operator $\rho$ and the backward graininess function $\nu$ defined by $\rho(t)=\sup\{s\in\T: s<t\}$ and
$\rho(\inf\T)=\inf\T$ if $\inf\T>-\infty$; $\nu(t)=t-\rho(t)$.
In the continuous-time
case, \textrm{i.e.} when $\T=\R$, we have $\sigma (t)=\rho(t)=t$
and $\mu(t)=\nu(t)=0$ for all $t\in\R$. In the discrete-time case,
$\sigma(t)=t+1$, $\rho(t)=t-1$, and $\mu(t)=\nu(t)=1$ for each
$t\in\T=\Z$. For the composition between a function
$f:\T\rightarrow\R$ and functions $\sigma:\T\rightarrow\T$ and
$\rho:\T\rightarrow\T$, we use the abbreviations
$f^{\sigma}(t)=f(\sigma(t))$ and $f^{\rho}(t)=f(\rho(t))$.
A point $t$ is called
left-scattered (right-scattered) if $\rho(t)<t,$ ($\sigma(t)>t$). A
point $t$ is called left-dense (right-dense) if
$\rho(t)=t,$ ($\sigma(t)=t$).
 The set
$\T^k$ is defined by $\T^k:=\T\backslash (\rho(\sup\T),\sup\T]$ if
$\sup\T<\infty$, and $\T^k=\T$ if $\sup\T=\infty$; the set $\T_k$
by $\T_k:=\T\backslash[\inf\T,\sigma(\inf\T))$ if $|\inf\T|
<\infty$, and $\T_k=\T$ if $\inf\T=-\infty$. Moreover, $\T^{k^{n+1}}:=\left(\T^{k^n}\right)^k,$
$\T_{k^{n+1}}:=\left(\T_{k^n}\right)_k$ and
$\T^k_k:=\T^k\cap \T_k$.

For a function $f:\T\rightarrow\R$, we define the
$\Delta$-derivative of $f$ at $t\in\T^k$, denoted by
$f^{\Delta}(t)$, to be the number, if it exists, with the property
that for all $\varepsilon>0$, exists a neighborhood $U\subset \T$
of $t\in\T^k$ such that for all $s\in U$,
$|f^\sigma(t)-f(s)-f^{\Delta}(t)(\sigma(t)-s)|\leq
\varepsilon|\sigma(t)-s|$. Function $f$ is said to be
$\Delta$-differentiable on $\T^k$ provided $f^{\Delta}(t)$ exists
for all $t\in\T^k$.

The $\nabla$-derivative of $f$, denoted by $f^{\nabla}(t)$, is
defined in a similar way: it is the number, if it exists, such
that for all $\varepsilon>0$ there is a neighborhood $V\subset \T$
of $t\in\T_k$ such that for all $s\in V,$
$|f^\rho(t)-f(s)-f^{\nabla}(t)(\rho(t)-s)|\leq
\varepsilon|\rho(t)-s|$. Function $f$ is said to be
$\nabla$-differentiable on $\T_k$ provided $f^{\nabla}(t)$ exists
for all $t\in\T_k$.

\begin{example} The classical settings are obtained choosing $\T = \R$
and $\T = \Z$:
\begin{enumerate}
\item Let $\T=\R$. Then, $f^{\Delta}(t)=f^{\nabla}(t)=f'(t)$ and
$f$ is $\Delta$ and $\nabla$ differentiable if and only if it is
differentiable in the ordinary sense.

\item Let $\T=c\Z$, $c>0$. Then,
$f^{\Delta}(t)=\frac{f(t+c)-f(t)}{c}$ and
$f^{\nabla}(t)=\frac{1}{c}\left(f(t)-f(t-c)\right)$ always exist.
\end{enumerate}
\end{example}

It is possible to establish some relationships between $\Delta$
and $\nabla$ derivatives.

\begin{theorem}{\rm \cite{Bh}}
(a) Assume that $f:\T\rightarrow\R$ is delta differentiable on
$\T^k$. Then, $f$ is nabla differentiable at $t$ and
$f^{\nabla}(t)=f^{\Delta}(\rho(t))$ for all $t\in\T_k$ such that
$\sigma(\rho(t))=t$. (b) Assume that $f:\T\rightarrow\R$ is nabla
differentiable on $\T_k$. Then, $f$ is delta differentiable at $t$
and $f^{\Delta}(t)=f^{\nabla}(\sigma(t))$ for all $t\in\T^k$ such
that $\rho(\sigma(t))=t$.
\end{theorem}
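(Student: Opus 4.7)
The plan is to prove part (a) by a case split on whether $t$ is left-dense or left-scattered; part (b) will follow by the completely symmetric argument obtained by interchanging $\Delta \leftrightarrow \nabla$, $\sigma \leftrightarrow \rho$, and left $\leftrightarrow$ right.

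If $t$ is left-dense, then $\rho(t) = t$, and the hypothesis $\sigma(\rho(t)) = t$ forces $\sigma(t) = t$, so $t$ is right-dense as well. The delta and nabla defining inequalities at $t$ then coincide word-for-word (both $\sigma(t) - s$ and $\rho(t) - s$ reduce to $t - s$), so $f^\nabla(t)$ exists and equals $f^\Delta(t) = f^\Delta(\rho(t))$ with the very same witnessing neighborhood.

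If $t$ is left-scattered, then $\rho(t) < t$ and $\rho(t)$ is right-scattered with $\sigma(\rho(t)) = t$. Applying the delta inequality at $\rho(t)$ with $u = \rho(t)$ and letting $\varepsilon \to 0^+$ yields the scattered-point identity
\[
A := f^\Delta(\rho(t)) = \frac{f(t) - f(\rho(t))}{t - \rho(t)}.
\]
To verify the nabla definition at $t$ with candidate $A$, I use this identity to rewrite $f(\rho(t)) - f(v) - A(\rho(t) - v) = A(v - t) - (f(v) - f(t))$. At $v = t$ this expression vanishes. For any other $v$ in a small neighborhood $V$ of $t$ in $\T$, no $\T$-point lies strictly between $\rho(t)$ and $t$, so every such $v$ satisfies $v > t$, and in particular $|\rho(t) - v| \geq t - \rho(t) > 0$ is uniformly bounded below. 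Since delta differentiability at $t$ implies continuity of $f$ at $t$, I can shrink $V$ so that $|A||v - t| + |f(v) - f(t)| \leq \varepsilon(t - \rho(t)) \leq \varepsilon|\rho(t) - v|$, yielding the nabla inequality.

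The delicate step is this last verification for $v > t$: it does not follow from any derivative matching at $t$, but instead exploits the uniform positive lower bound on $|\rho(t) - v|$, which absorbs the smallness of $|f(v) - f(t)|$ produced by continuity alone.
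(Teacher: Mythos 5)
The paper does not prove this statement---it is quoted from \cite{Bh} without proof---so there is nothing internal to compare against; judged on its own, your argument is correct and is essentially the standard one: the left-dense/left-scattered dichotomy, the quotient identity $f^{\Delta}(\rho(t))=\frac{f(t)-f(\rho(t))}{t-\rho(t)}$ at the right-scattered point $\rho(t)$, and continuity of $f$ at $t$ to absorb the error for $v>t$ against the uniform lower bound $|\rho(t)-v|\geq \nu(t)>0$. The only point you leave implicit is that invoking ``delta differentiability at $t$ implies continuity at $t$'' presupposes $t\in\T^k$, which can fail when $t=\max\T$ is left-scattered; but in that case the neighborhood $V$ contains no points $v>t$ at all, so the verification is vacuous there and the proof goes through.
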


A function $f:\T \rar \R$ is called rd-continuous provided it is
continuous at right-dense points in $\T$ and its left-sided limits
exist (finite) at left-dense points in $\T$. The class of real
rd-continuous functions defined on a time scale $\T$ is denoted by
$C_{rd}(\T,\R)$.

If $f\in C_{rd}(\T,\R)$, then there exists a function $F(t)$ such
that $F^{\Delta}(t)=f(t)$. The delta-integral is defined by
$\int_{a}^bf(t)\Delta t=F(b)-F(a)$.

Similarly, a function $f:\T \rar \R$ is called ld-continuous
provided it is continuous at left-dense points in $\T$ and its
right-sided limits exist (finite) at right-dense points in $\T$.
The class of real ld-continuous functions defined on a time scale
$\T$ is denoted by $C_{ld}(\T,\R)$. If $f\in C_{ld}(\T,\R)$, then
there exists a function $G(t)$ such that $G^{\nabla}(t)=f(t)$. In
this case we define $\int_{a}^bf(t)\nabla t=G(b)-G(a)$.

\begin{example}
Let $\T=c\Z$, $c>0$, and $f\in C_{rd}(\T,\R) \cap C_{ld}(\T,\R)$.
Then, one has: $\int_{a}^bf(t)\Delta t=c\sum_{t=a}^{b-c}f(t)$,
$\int_{a}^bf(t)\nabla t=c\sum_{t=a+c}^{b}f(t)$.
\end{example}

\begin{definition}\cite{Sheng1}
Let $\mu_{ts}=\sigma(t)-s$, $\eta_{ts}=\rho(t)-s$, and
$f:\T\rightarrow\R$. The diamond-alpha derivative of $f$ at $t$ is
defined to be the value $f^{\diam}(t)$, if it exists, such that
for all $\varepsilon>0$ there is a neighborhood $U\subset \T$ of
$t$ such that for all $s\in U,$
$$\left|\al\left[f^{\sigma}(t)-f(s)\right]\eta_{ts}
+(1-\al)\left[f^{\rho}(t)-f(s)\right]\mu_{ts}
-f^{\diam}(t)\mu_{ts}\eta_{ts}\right|\leq
\varepsilon|\mu_{ts}\eta_{ts}| \, . $$
We say that function $f$ is
$\diam$-differentiable on $\T^k_k$, provided $f^{\diam}(t)$ exists for all $t\in\T^k_k$.
\end{definition}

\begin{theorem}\cite{Sheng1}\label{thsheng}
Let $f:\T\rightarrow\R$  be simultaneously $\Delta$ and $\nabla$
differentiable at $t\in\T^k_k$. Then, $f$ is $\diam$-differentiable at
$t$ and $f^{\diam}(t)=\al f^{\Delta}(t)+(1-\al)f^{\nabla}(t)$,
$\al\in[0,1]$.
\end{theorem}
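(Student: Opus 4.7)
The plan is to derive the diamond-alpha inequality directly by forming an appropriate convex combination of the $\Delta$ and $\nabla$ defining inequalities. Since $f$ is both $\Delta$- and $\nabla$-differentiable at $t$, for any $\varepsilon>0$ there exist neighborhoods $U_1$ and $U_2$ of $t$ such that for all $s\in U_1$
$$|f^{\sigma}(t)-f(s)-f^{\Delta}(t)\mu_{ts}|\leq \varepsilon|\mu_{ts}|,$$
and for all $s\in U_2$
$$|f^{\rho}(t)-f(s)-f^{\nabla}(t)\eta_{ts}|\leq \varepsilon|\eta_{ts}|,$$
where I have used $\mu_{ts}=\sigma(t)-s$ and $\eta_{ts}=\rho(t)-s$ as in the diamond-alpha definition.

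Next, I would multiply the first inequality by $\alpha|\eta_{ts}|$ and the second by $(1-\alpha)|\mu_{ts}|$ (both nonnegative since $\alpha\in[0,1]$). This yields
$$|\alpha[f^{\sigma}(t)-f(s)]\eta_{ts}-\alpha f^{\Delta}(t)\mu_{ts}\eta_{ts}|\leq \alpha\varepsilon|\mu_{ts}\eta_{ts}|,$$
$$|(1-\alpha)[f^{\rho}(t)-f(s)]\mu_{ts}-(1-\alpha)f^{\nabla}(t)\mu_{ts}\eta_{ts}|\leq (1-\alpha)\varepsilon|\mu_{ts}\eta_{ts}|.$$
Setting $U:=U_1\cap U_2$ and applying the triangle inequality on $U$, the two $-\alpha f^\Delta(t)\mu_{ts}\eta_{ts}$ and $-(1-\alpha)f^\nabla(t)\mu_{ts}\eta_{ts}$ terms combine to give exactly $-[\alpha f^{\Delta}(t)+(1-\alpha)f^{\nabla}(t)]\mu_{ts}\eta_{ts}$, and the right-hand sides sum to $\varepsilon|\mu_{ts}\eta_{ts}|$. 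This is precisely the defining inequality in the diamond-alpha definition with candidate value $\alpha f^{\Delta}(t)+(1-\alpha)f^{\nabla}(t)$, which proves simultaneously existence of $f^{\diam}(t)$ and the claimed formula.

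There is no serious obstacle here; the only mild subtlety is making sure the scaling factors $|\eta_{ts}|$ and $|\mu_{ts}|$ are inserted correctly so that the products $\mu_{ts}\eta_{ts}$ appear with matching signs when dropping the absolute values in the combination step. The proof does not require $t$ to be isolated or dense on either side, so it works uniformly on $\T^k_k$.
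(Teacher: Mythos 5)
Your proof is correct: multiplying the $\Delta$- and $\nabla$-inequalities by $\alpha|\eta_{ts}|$ and $(1-\alpha)|\mu_{ts}|$ respectively, intersecting the two neighborhoods, and applying the triangle inequality yields exactly the defining inequality for $f^{\diamondsuit_{\alpha}}(t)$ with candidate value $\alpha f^{\Delta}(t)+(1-\alpha)f^{\nabla}(t)$. The paper states this result without proof, citing Rogers and Sheng, and your argument is precisely the standard one given there; note only that no absolute values ever need to be ``dropped,'' since $|\eta_{ts}|\,|A-B\mu_{ts}|=|A\eta_{ts}-B\mu_{ts}\eta_{ts}|$ holds regardless of signs, so the subtlety you flag at the end is not actually an issue.
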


\begin{remark}
The $\diam$-derivative is a convex combination of delta and nabla
derivatives. It reduces to the $\Delta$-derivative for $\al=1$ and
to the $\nabla$-derivative for $\al=0$. The case $\al=0.5$ has
proved to be very useful in applications. For more on the theory
of $\diam$-derivatives than that we are able to provide here, we
refer the interested reader to \cite{Ozkan,Sheng1,Sheng,SFHD}.
\end{remark}

The same idea used to define the combined derivative is taken to
define the combined integral.

\begin{definition}\label{diaminteg}
Let $a,b\in\T$ and $f\in C_{rd}(\T,\R)\cap C_{ld}(\T,\R)$. Then,
the $\diam$-integral of $f$ is defined by
$\int_a^bf(\tau)\diam\tau=\al
\int_a^bf(\tau)\Delta\tau+(1-\al)\int_a^bf(\tau)\nabla\tau$, where
$\al\in[0,1]$.
\end{definition}

In general the $\diam$-derivative of $\int_a^t f(\tau)\diam\tau$
with respect to $t$ is not equal to $f(t)$ \cite{Sheng}.

Next proposition gives direct formulas for the $\diam$-derivative
of the exponential functions $e_p(\cdot,t_0)$ and
$\hat{e}_p(\cdot,t_0)$. For the definition of exponential and
trigonometric functions on time scales see, \textrm{e.g.},
\cite{Bh}.

\begin{proposition}
\label{diamexpder} Let $\T$  be a time scale with the following properties: $\sigma(\rho(t))=t$, and $\rho(\sigma(t))=t$. Assume that $t_0\in\T$ and for all $t\in\T$ one has  $1+\mu(t)p(t)\neq 0$, and $1-\nu(t)p(t)\neq 0$. Then,
\begin{gather}
e_p^{\diam}(t,t_0)=\left[\al p(t)+\frac{(1-\al)p^{\rho}(t)}{1+\nu(t)p^{\rho}(t)}\right]e_p(t,t_0) \, , \label{eq:a} \\
\hat{e}_p^{\diam}(t,t_0)=\left[(1-\al)p(t)+
\frac{\al p^{\sigma}(t)}{1-\mu(t)p^{\sigma}(t)}\right]\hat{e}_p(t,t_0) \, , \label{eq:b}
\end{gather}
for $t\in\T^k_k$.
\end{proposition}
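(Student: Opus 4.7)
The plan is to apply Theorem~\ref{thsheng}, which gives $f^{\diam}(t)=\al f^{\Delta}(t)+(1-\al)f^{\nabla}(t)$, so everything reduces to computing $\Delta$- and $\nabla$-derivatives of $e_p(\cdot,t_0)$ and $\hat{e}_p(\cdot,t_0)$. For each of these exponentials one derivative is immediate (the ``native'' one), and the other is recovered from the bridging identities $f^{\nabla}(t)=f^{\Delta}(\rho(t))$ when $\sigma(\rho(t))=t$ and $f^{\Delta}(t)=f^{\nabla}(\sigma(t))$ when $\rho(\sigma(t))=t$. This is where the standing hypotheses $\sigma(\rho(t))=t$ and $\rho(\sigma(t))=t$ enter.

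For \eqref{eq:a}, I would start from the defining property $e_p^{\Delta}(t,t_0)=p(t)\,e_p(t,t_0)$, and then compute the nabla derivative by the bridge identity: $e_p^{\nabla}(t,t_0)=e_p^{\Delta}(\rho(t),t_0)=p^{\rho}(t)\,e_p^{\rho}(t,t_0)$. To turn $e_p^{\rho}(t,t_0)$ into $e_p(t,t_0)$ I use the simple-jump rule $e_p^{\sigma}(s,t_0)=(1+\mu(s)p(s))\,e_p(s,t_0)$ at $s=\rho(t)$: since $\sigma(\rho(t))=t$ and $\mu(\rho(t))=t-\rho(t)=\nu(t)$, this gives $e_p(t,t_0)=(1+\nu(t)p^{\rho}(t))\,e_p^{\rho}(t,t_0)$, invertible because the delta-regressivity assumption $1+\mu(s)p(s)\neq 0$ applied at $s=\rho(t)$ reads precisely $1+\nu(t)p^{\rho}(t)\neq 0$. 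Substituting yields $e_p^{\nabla}(t,t_0)=\dfrac{p^{\rho}(t)}{1+\nu(t)p^{\rho}(t)}\,e_p(t,t_0)$, and then Theorem~\ref{thsheng} delivers \eqref{eq:a}.

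The argument for \eqref{eq:b} is symmetric. I begin with $\hat{e}_p^{\nabla}(t,t_0)=p(t)\,\hat{e}_p(t,t_0)$, and obtain the delta derivative by the other bridge identity: $\hat{e}_p^{\Delta}(t,t_0)=\hat{e}_p^{\nabla}(\sigma(t),t_0)=p^{\sigma}(t)\,\hat{e}_p^{\sigma}(t,t_0)$. Using the nabla simple-jump rule $\hat{e}_p^{\rho}(s,t_0)=(1-\nu(s)p(s))\hat{e}_p(s,t_0)$ at $s=\sigma(t)$, with $\rho(\sigma(t))=t$ and $\nu(\sigma(t))=\sigma(t)-t=\mu(t)$, I get $\hat{e}_p(t,t_0)=(1-\mu(t)p^{\sigma}(t))\hat{e}_p^{\sigma}(t,t_0)$, invertible by the nabla-regressivity hypothesis $1-\nu(s)p(s)\neq 0$ applied at $s=\sigma(t)$. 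Hence $\hat{e}_p^{\Delta}(t,t_0)=\dfrac{p^{\sigma}(t)}{1-\mu(t)p^{\sigma}(t)}\,\hat{e}_p(t,t_0)$, and the convex combination from Theorem~\ref{thsheng} gives \eqref{eq:b}. The only real obstacle is bookkeeping: making sure that the regressivity assumed at every $t\in\T$ really translates, through the identities $\mu(\rho(t))=\nu(t)$ and $\nu(\sigma(t))=\mu(t)$, into the invertibility of the denominators $1+\nu(t)p^{\rho}(t)$ and $1-\mu(t)p^{\sigma}(t)$ that appear in the final formulas.
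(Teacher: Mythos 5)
Your proposal is correct and takes essentially the same route as the paper: both compute the non-native derivative of each exponential via the bridge identities $f^{\nabla}(t)=f^{\Delta}(\rho(t))$ and $f^{\Delta}(t)=f^{\nabla}(\sigma(t))$, arrive at $e_p^{\nabla}(t,t_0)=\frac{p^{\rho}(t)}{1+\nu(t)p^{\rho}(t)}e_p(t,t_0)$ and $\hat{e}_p^{\Delta}(t,t_0)=\frac{p^{\sigma}(t)}{1-\mu(t)p^{\sigma}(t)}\hat{e}_p(t,t_0)$, and conclude with Theorem~\ref{thsheng}. The only cosmetic difference is that the paper converts $e_p^{\rho}$ into $e_p$ by substituting $f^{\rho}=f-\nu f^{\nabla}$ and solving a linear equation for $e_p^{\nabla}(t,t_0)$, whereas you apply the jump rule at $s=\rho(t)$ (and your explicit tracking of $\mu(\rho(t))=\nu(t)$ and the regressivity conditions is a welcome extra detail the paper leaves implicit).
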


\begin{proof}
Firstly, recall that $f^{\sigma}= f+\mu f^{\Delta}$ and $f^{\rho}
= f-\nu f^{\nabla}$. Hence,
$e_p^{\nabla}(t,t_0)=p^{\rho}(t)e_p^{\rho}(t,t_0)
=p^{\rho}(t)\left(e_p(t,t_0)-\nu(t)e_p^{\nabla}(t,t_0)\right)$, and then
$e_p^{\nabla}(t,t_0)=\frac{p^{\rho}(t)}{1+p^{\rho}(t)\nu(t)}e_p(t,t_0)$, from
where it follows \eqref{eq:a}. Similarly, we have that
$\hat{e}_p^{\Delta}(t,t_0)=p^{\sigma}(t)\hat{e}_p^{\sigma}(t,t_0)
=p^{\sigma}(t)\left(\hat{e}_p(t,t_0)+\mu(t)\hat{e}_p^{\Delta}(t,t_0)\right)$,
and then
$\hat{e}_p^{\Delta}(t,t_0)=\frac{p^{\sigma}(t)}{1-p^{\sigma}(t)\mu(t)}\hat{e}_p(t,t_0)$,
from where \eqref{eq:b} holds.
\end{proof}

\begin{corollary}
\label{cor1} Let $t,t_0\in\T$, $p(t)\equiv p$, and  $1-\nu^2(t)p^2\neq 0$ for all $t\in\T$. Then, for $t\in\T^k_k$,

(a) $\sin_p^{\diam}(t,t_0)
=\frac{p}{1+\nu^2p^2}((1+\al\nu^2p^2)\cos_p(t,t_0)$
$+(1-\al)\nu p\sin_p(t,t_0))$;

(b) $\cos_p^{\diam}(t,t_0)
=\frac{-p}{1+\nu^2p^2}((1+\al\nu^2p^2)\sin_p(t,t_0)$
$-(1-\al)\nu p\cos_p(t,t_0))$;

(c) $\sinh_p^{\diam}(t,t_0)
=\frac{p}{1-\nu^2p^2}((1-\al\nu^2p^2)\cosh_p(t,t_0)$
$-(1-\al)\nu p\sinh_p(t,t_0))$;

(d) $\cosh_p^{\diam}(t,t_0)
=\frac{p}{1-\nu^2p^2}((1-\al\nu^2p^2)\sinh_p(t,t_0)$
$-(1-\al)\nu p\cosh_p(t,t_0))$.
\end{corollary}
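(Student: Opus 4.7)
The plan is to reduce all four identities to Proposition~\ref{diamexpder} via the standard representations
\[
\sin_p = \frac{e_{ip}-e_{-ip}}{2i},\quad \cos_p = \frac{e_{ip}+e_{-ip}}{2},\quad \sinh_p = \frac{e_p-e_{-p}}{2},\quad \cosh_p = \frac{e_p+e_{-p}}{2}
\]
of the generalized trigonometric and hyperbolic functions (the arguments $(t,t_0)$ are suppressed). Because $p$ is constant, $p^{\rho}(t)=p$, and formula~\eqref{eq:a} specializes to
\[
e_q^{\diam}(t,t_0) = \left[\al q + \frac{(1-\al)q}{1+\nu(t)q}\right]e_q(t,t_0)
\]
for every admissible complex constant $q$. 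I would apply this with $q=\pm ip$ to obtain (a)--(b) and with $q=\pm p$ to obtain (c)--(d).

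The first step is to rationalize each scalar coefficient. For $q=\pm p$, combining the two summands over a common denominator gives $e_p^{\diam}(t,t_0) = \frac{p(1+\al\nu p)}{1+\nu p}\,e_p(t,t_0)$ and $e_{-p}^{\diam}(t,t_0) = -\frac{p(1-\al\nu p)}{1-\nu p}\,e_{-p}(t,t_0)$. The second step is to form the $\tfrac12$-sum and $\tfrac12$-difference dictated by the definitions of $\sinh_p$ and $\cosh_p$, placed over the common denominator $(1+\nu p)(1-\nu p) = 1-\nu^2 p^2$. Expanding $(1\pm\al\nu p)(1\mp\nu p)$ and sorting the result by parity in $\nu p$ produces an even part proportional to $1-\al\nu^2 p^2$ and an odd part proportional to $(1-\al)\nu p$, which re-assemble into precisely the $\cosh_p$/$\sinh_p$ combinations in (c)--(d). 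The cases (a)--(b) follow from the same algebraic pattern after the substitution $p\mapsto ip$: the complex conjugate $1\mp i\nu p$ now yields the denominator $1+\nu^2 p^2$ appearing in (a)--(b), and the factor of $i$ introduced during rationalization cancels against the $1/(2i)$ in the definition of $\sin_p$.

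The only genuine work is careful bookkeeping of the conjugate pairs; no analytic input is needed beyond Proposition~\ref{diamexpder} together with the linearity of the $\diam$-derivative (immediate from Theorem~\ref{thsheng}). The hypothesis $1-\nu^2(t)p^2\neq 0$ simultaneously guarantees $1\pm\nu(t) p\neq 0$ and $1\pm i\nu(t) p\neq 0$, so the scalar coefficients above are always defined and Proposition~\ref{diamexpder} applies for every choice of $q\in\{\pm p,\pm ip\}$ under consideration.
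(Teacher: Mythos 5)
The paper states this corollary without proof, but its placement immediately after Proposition~\ref{diamexpder} makes clear that the intended derivation is exactly the one you give: specialize \eqref{eq:a} to the constant regressive functions $q=\pm ip$ and $q=\pm p$, and recombine via the exponential representations of $\sin_p$, $\cos_p$, $\sinh_p$, $\cosh_p$. Your algebra checks out in all four cases (I verified the parity decomposition of $(1\pm\al\nu p)(1\mp\nu p)$ and its complex analogue), so the proposal is correct and follows the evidently intended route.
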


\begin{corollary}
Let $t_0\in\T$, $p(t)\equiv p$, and
$1-\mu^2(t)p^2\neq 0$ for all $t\in\T$. Then, for $t\in\T^k_k$,

(a) $\widehat{\sin}_p^{\diam}(t,t_0)
=\frac{p}{1+\mu^2p^2}((1+(1-\al)\mu^2p^2)\widehat{\cos}_p(t,t_0)$
$ -\al\mu p\widehat{\sin}_p(t,t_0))$;

(b) $\widehat{\cos}_p^{\diam}(t,t_0)
=\frac{-p}{1+\mu^2p^2}((1+(1-\al)\mu^2p^2)\widehat{\sin}_p(t,t_0)$
$ +\al\mu p\widehat{\cos}_p(t,t_0))$;

(c) $\widehat{\sinh}_p^{\diam}(t,t_0)
=\frac{p}{1-\mu^2p^2}((1-(1-\al)\mu^2p^2)\widehat{\cosh}_p(t,t_0)$
$+\al\mu p\widehat{\sinh}_p(t,t_0))$;

(d) $\widehat{\cosh}_p^{\diam}(t,t_0)
=\frac{p}{1-\mu^2p^2}((1-(1-\al)\mu^2p^2)\widehat{\sinh}_p(t,t_0)$
$ +\al\mu p\widehat{\cosh}_p(t,t_0))$.
\end{corollary}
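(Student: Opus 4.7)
The plan is to reduce each of (a)--(d) to Proposition~\ref{diamexpder} by writing the hat-trigonometric and hat-hyperbolic functions as linear combinations of nabla-exponentials with opposite constant arguments, and then invoking linearity of $\diam$. Concretely, I would use
$$
\widehat{\sinh}_p = \tfrac{1}{2}(\hat{e}_p-\hat{e}_{-p}),\qquad \widehat{\cosh}_p = \tfrac{1}{2}(\hat{e}_p+\hat{e}_{-p}),
$$
together with the analogous identities for $\widehat{\sin}_p$ and $\widehat{\cos}_p$ obtained by replacing $\pm p$ with $\pm ip$ (and dividing $\widehat{\sin}_p$ by $2i$). Linearity of $\diam$ is immediate from Theorem~\ref{thsheng} and the linearity of $\Delta$ and $\nabla$, so the task reduces to evaluating $\hat{e}_{\pm p}^{\diam}$ and $\hat{e}_{\pm ip}^{\diam}$.

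Since $p$ is constant, $p^{\sigma}=p$, and \eqref{eq:b} collapses to
$$
\hat{e}_{\pm p}^{\diam}(t,t_0) = \frac{\pm p\bigl(1\mp(1-\al)\mu(t)p\bigr)}{1\mp\mu(t)p}\,\hat{e}_{\pm p}(t,t_0).
$$
I would subtract these two identities for $\widehat{\sinh}_p^{\diam}$ and add them for $\widehat{\cosh}_p^{\diam}$, bring everything over the common denominator $1-\mu^2 p^2$, and regroup. The symmetric part of the numerator yields the coefficient $p\bigl(1-(1-\al)\mu^2 p^2\bigr)$ in front of $\widehat{\cosh}_p$ (resp.\ $\widehat{\sinh}_p$), while the antisymmetric part produces the cross-term $\al\mu p^2$ multiplying $\widehat{\sinh}_p$ (resp.\ $\widehat{\cosh}_p$), matching (c) and (d). For (a)--(b) the same recipe with $p\mapsto\pm ip$ turns every $\mu^2 p^2$ into $-\mu^2 p^2$, so the denominator becomes $1+\mu^2 p^2$ and the coefficient $(1-\al)$ switches sign inside the parenthesis; the factor $i$ is absorbed by the definitions of $\widehat{\sin}_p$ and $\widehat{\cos}_p$.

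There is no conceptual obstacle, only algebraic bookkeeping on top of \eqref{eq:b}. The delicate point is the asymmetry of the factor $1-(1-\al)\mu p$ under the substitution $p\mapsto -p$: this asymmetry is precisely what generates the cross-terms involving $\al$ on the right-hand sides, and a single sign slip there would invalidate the identities. As a consistency check, at $\al=0$ the cross-terms vanish and (c)--(d) reduce to $\widehat{\sinh}_p^{\nabla}=p\widehat{\cosh}_p$ and $\widehat{\cosh}_p^{\nabla}=p\widehat{\sinh}_p$, while $\al=1$ recovers the pure delta-derivative formulas; both limits can be read off directly from the stated identities, giving confidence in the sign conventions.
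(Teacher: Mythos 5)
Your proof is correct and is essentially the argument the paper intends (the corollary is stated without proof as an immediate consequence of Proposition~\ref{diamexpder}): write $\widehat{\sinh}_p,\widehat{\cosh}_p,\widehat{\sin}_p,\widehat{\cos}_p$ as the standard Euler combinations of $\hat{e}_{\pm p}$ and $\hat{e}_{\pm ip}$, apply the linearity of $\diam$ and formula \eqref{eq:b} with constant $p$ (so $p^{\sigma}=p$ and the bracket simplifies to $p\bigl(1-(1-\al)\mu p\bigr)/(1-\mu p)$), and clear denominators over $1\mp\mu^2p^2$. I checked the regrouping: the symmetric and antisymmetric parts of the numerators produce exactly the stated coefficients, and the hypothesis $1-\mu^2(t)p^2\neq 0$ is precisely what keeps both denominators $1\pm\mu p$ nonzero.
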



\section{Generalized monomials and polynomial series}

Let $\T$ be an arbitrary time scale. Let us define recursively
functions $h_k:\T\times\T\rightarrow\R$, $k\in\N\cup\{0\}$, as
follows:
 \begin{equation*}
    h_0(t,t_0)\equiv 1 \, , \quad
    h_{k+1}(t,t_0)=\int_{t_0}^th_k(\tau,t_0)\Delta \tau.
\end{equation*}

Similarly, we consider the monomials $\hat{h}_k(\cdot,t_0)$: they
are the functions $\hat{h}_k:\T\times\T\rightarrow\R$,
$k\in\N\cup\{0\}$, defined recursively by
 \begin{equation*}
    \hat{h}_0(t,t_0)\equiv 1 \, , \quad
    \hat{h}_{k+1}(t,t_0)=\int_{t_0}^t\hat{h}_k(\tau,t_0)\nabla \tau.
\end{equation*}
All functions $h_k$ are rd-continuous, all $\hat{h}_k$ are
ld-continuous. The derivatives of such functions show nice
properties: $h^{\Delta}_k(t,t_0) = h_{k-1}(t,t_0)$, $t\in\T^k$;
and $\hat{h}^{\nabla}_k(t,t_0)=\hat{h}_{k-1}(t,t_0)$, $t\in\T_k$,
where $t_0\in\T$ and derivatives are taken with respect to $t$. We
have that
\begin{equation*}
h_1(t,t_0)=\hat{h}_1(t,t_0)=t-t_0, \quad t, t_0\in\T \, .
\end{equation*}
For $\T=\R$, $h_k(t,t_0)=\hat{h}_k(t,t_0)=\frac{(t-t_0)^k}{k!}$.
Finding exact formulas of $h_k$ or $\hat{h}_k$ for an arbitrary
time scale is, however, not easy.
From \cite{HP} we have the following result:
\begin{theorem}
Let $t \in\T_k\cap\T^k$ and $t_0\in\T^{k^n}$. Then,
$\hat{h}_k(t,t_0)=(-1)^kh_k(t_0,t)$ for all $k\in\N\cup\{0\}$.
\end{theorem}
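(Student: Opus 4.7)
The plan is to proceed by induction on $k$. The base cases $k=0$ and $k=1$ are immediate, since $\hat{h}_0\equiv h_0\equiv 1$ and $\hat{h}_1(t,t_0)=t-t_0=-(t_0-t)=-h_1(t_0,t)$.

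For the inductive step I would fix $t_0$ and treat both sides as functions of $t\in\T^k\cap\T_k$, and show that they satisfy the same $\nabla$-initial value problem. Both vanish at $t=t_0$, since $\hat{h}_k(t_0,t_0)=h_k(t_0,t_0)=0$ for $k\geq 1$. By the defining recursion, $(\hat{h}_k(\cdot,t_0))^{\nabla}(t)=\hat{h}_{k-1}(t,t_0)$, which by the inductive hypothesis equals $(-1)^{k-1}h_{k-1}(t_0,t)$. Everything therefore reduces to showing
\[
\bigl[(-1)^k h_k(t_0,\cdot)\bigr]^{\nabla}(t)=(-1)^{k-1}h_{k-1}(t_0,t).
\]

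To obtain this, I would rewrite $h_k(t_0,t)=\int_t^{t_0}h_{k-1}(\tau,t)\Delta\tau$ and invoke a Leibniz-type rule on time scales, since the variable $t$ appears both as the lower endpoint of the $\Delta$-integral and as a parameter inside the integrand. The boundary contribution is $-h_{k-1}(t,t)$, which vanishes for $k\geq 2$. For the interior contribution $\int_t^{t_0}\bigl[h_{k-1}(\tau,\cdot)\bigr]^{\nabla}(t)\Delta\tau$, I would use the inductive hypothesis at index $k-1$ to rewrite $h_{k-1}(\tau,t)=(-1)^{k-1}\hat{h}_{k-1}(t,\tau)$, whose $\nabla$-derivative in $t$ is $(-1)^{k-1}\hat{h}_{k-2}(t,\tau)$; a second application of the inductive hypothesis at index $k-2$ then converts this into $-h_{k-2}(\tau,t)$. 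Re-integrating in $\tau$ recovers $-h_{k-1}(t_0,t)$, and restoring the overall $(-1)^k$ factor yields the displayed identity. The case $k=1$, in which the boundary term is the nonzero constant $-h_0(t,t)=-1$, is absorbed into the base case rather than handled by this step.

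Once both sides are seen to satisfy the same $\nabla$-IVP on $\T^k\cap\T_k$, uniqueness of the nabla antiderivative forces $\hat{h}_k(t,t_0)=(-1)^k h_k(t_0,t)$, closing the induction. I expect the Leibniz step to be the main obstacle: differentiating under a $\Delta$-integral sign with respect to $\nabla_t$ while $t$ appears in both the lower limit and the integrand requires a specific time-scale rule, and the hypothesis $t_0\in\T^{k^n}$ is precisely what keeps every nested monomial, derivative, and integral well defined throughout the induction.
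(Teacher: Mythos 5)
The paper does not prove this theorem at all: it is quoted verbatim from the reference [HP] (Higgins--Peterson), so there is no in-paper argument to compare yours against. Judged on its own, your plan is sound in outline and is in fact close to the standard literature proof (which also proceeds by induction and reduces everything to the behaviour of $h_k$ under differentiation in its \emph{second} variable). The base cases, the reduction to the identity $\bigl[h_k(t_0,\cdot)\bigr]^{\nabla}(t)=-h_{k-1}(t_0,t)$, the double use of the inductive hypothesis at indices $k-1$ and $k-2$ to compute the inner derivative, and the final appeal to uniqueness of the nabla antiderivative with prescribed value at $t_0$ all check out (one can verify the whole scheme explicitly on $\T=\Z$).

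The one genuine gap is the step you yourself flag: the Leibniz-type rule for taking a $\nabla$-derivative of a $\Delta$-integral whose lower limit and integrand both depend on $t$ is not a standard quotable result, and you neither state it precisely nor prove it. It can be established at left-scattered points by the splitting
\begin{equation*}
F(t)-F^{\rho}(t)=\Bigl[G(t,t)-G(t,\rho(t))\Bigr]+\Bigl[G(t,\rho(t))-G(\rho(t),\rho(t))\Bigr],
\qquad G(r,s)=\int_r^{t_0}h_{k-1}(\tau,s)\,\Delta\tau ,
\end{equation*}
which yields the interior term $\int_t^{t_0}\bigl[h_{k-1}(\tau,\cdot)\bigr]^{\nabla}(t)\,\Delta\tau$ plus the boundary term $-h_{k-1}(\rho(t),\rho(t))$ --- note the evaluation point is $(\rho(t),\rho(t))$, not $(t,t)$ as you wrote; both vanish for $k\ge 2$, but with the other natural splitting the boundary term is $-h_{k-1}(\rho(t),t)$, which does \emph{not} vanish, so the bookkeeping is not innocuous. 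At left-dense points the difference-quotient argument above does not apply and you need a genuine limit/uniformity argument to differentiate under the integral sign. Until that lemma is stated and proved, the induction does not close; everything else in your proposal is fine.
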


Next proposition gives explicit
formulas for homogenous time scales with $\mu(t)=c$, $c$ a
strictly positive constant. For that we need two notations of
factorial functions: for $k\geq 1$ we define $t^{\underline{k}} =
t(t-1)\cdots(t-k+1)$ and $t^{\overline{k}} = t(t+1)\cdots(t+k-1)$
with $t^{\underline{0}}:=1$ and $t^{\overline{0}}:=1$.

\begin{proposition} Let $c>0$ and $\T=c\Z$.
For $k\in\N\cup\{0\}$ the following equalities hold:

(a) $h_k(t,t_0)=c^k\binom{\frac{t-t_0}{c}}{k}
=c^k\frac{\left(\frac{t-t_0}{c}\right)^{\underline{k}}}{k!}$;

(b) $\hat{h}_k(t,t_0)
=c^k\frac{\left(\frac{t-t_0}{c}\right)^{\overline{k}}}{k!}$.
\end{proposition}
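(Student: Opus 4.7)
The natural strategy is induction on $k$, using the defining recursions together with the explicit integration formulas from Example~2 on $\T = c\Z$.

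For the base case $k=0$, both claimed expressions reduce to $1$, matching $h_0 = \hat h_0 \equiv 1$. For the inductive step in part~(a), assume $h_k(t,t_0)=c^k ((t-t_0)/c)^{\underline{k}}/k!$ and compute
\[
h_{k+1}(t,t_0)=\int_{t_0}^t h_k(\tau,t_0)\Delta\tau
= c\sum_{\tau=t_0}^{t-c} c^k\frac{((\tau-t_0)/c)^{\underline{k}}}{k!}.
\]
Substituting $j=(\tau-t_0)/c$, with $n:=(t-t_0)/c$, this becomes
$c^{k+1}\sum_{j=0}^{n-1}\binom{j}{k}$. The hockey-stick identity
$\sum_{j=0}^{n-1}\binom{j}{k}=\binom{n}{k+1}$ then yields
$h_{k+1}(t,t_0)=c^{k+1}\binom{n}{k+1}=c^{k+1} n^{\underline{k+1}}/(k+1)!$,
which is the required formula. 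Part~(b) is analogous: on $\T=c\Z$ one has $\int_{t_0}^t f(\tau)\nabla\tau=c\sum_{\tau=t_0+c}^{t}f(\tau)$, and using $j^{\overline{k}}/k!=\binom{j+k-1}{k}$ we get
$\hat h_{k+1}(t,t_0)=c^{k+1}\sum_{j=1}^{n}\binom{j+k-1}{k}
=c^{k+1}\binom{n+k}{k+1}=c^{k+1}n^{\overline{k+1}}/(k+1)!$, again by hockey-stick (after re-indexing $m=j+k-1$).

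An equivalent, slightly slicker route is to verify that the candidate formulas satisfy the recursion by direct differentiation, exploiting the fact that on $c\Z$ one has $f^\Delta(t)=(f(t+c)-f(t))/c$ and $f^\nabla(t)=(f(t)-f(t-c))/c$. For (a), a one-line computation using $(n+1)^{\underline{k}}-n^{\underline{k}}=k\, n^{\underline{k-1}}$ gives $h_k^\Delta=h_{k-1}$, and at $t=t_0$ one has $n=0$, so $n^{\underline{k}}=0$ for $k\ge 1$, matching $h_k(t_0,t_0)=0$. Part~(b) works the same way with $n^{\overline{k}}-(n-1)^{\overline{k}}=k\,n^{\overline{k-1}}$.

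There is no real obstacle here; the only thing to be careful about is correctly identifying the two finite-difference identities for falling and rising factorials (or, equivalently, the two hockey-stick sums) and the index ranges produced by the $\Delta$ versus $\nabla$ integration on $c\Z$. Once these are in place, the induction closes by a short computation in each case.
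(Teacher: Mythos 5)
Your proposal is correct, and your ``slicker route'' is essentially the paper's own argument for part (a): the authors verify that the candidate $c^{k+1}\binom{(t-t_0)/c}{k+1}$ has $\Delta$-derivative $c^k\binom{(t-t_0)/c}{k}$ via Pascal's identity $\binom{n+1}{k+1}-\binom{n}{k+1}=\binom{n}{k}$ (equivalently your identity $(n+1)^{\underline{k}}-n^{\underline{k}}=k\,n^{\underline{k-1}}$) and close the induction; you are actually slightly more careful than the paper in that you explicitly check the initial condition $h_k(t_0,t_0)=0$, which is needed to identify the candidate with the unique antiderivative vanishing at $t_0$ and is left implicit in the paper (which also has a small typo, writing $\binom{\cdot}{k}$ where $\binom{\cdot}{k+1}$ is meant in the intermediate step). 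The genuine divergence is in part (b): you redo the whole computation for the rising factorial, either by the nabla-sum plus hockey-stick or by the difference identity $n^{\overline{k}}-(n-1)^{\overline{k}}=k\,n^{\overline{k-1}}$, whereas the paper simply invokes the duality $\hat{h}_k(t,t_0)=(-1)^k h_k(t_0,t)$ from the Higgins--Peterson theorem quoted just above the proposition, so that (b) follows from (a) with no further work. Your first route (explicit summation of the $\Delta$- and $\nabla$-integrals on $c\Z$ combined with the hockey-stick identity) is a valid alternative not taken by the paper; its only delicate point is the sign convention for the sums when $t<t_0$, which the derivative-based verification sidesteps, and which your second route already covers.
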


\begin{proof}
Firstly, $h_0(t,t_0)=\hat{h}_0(t,t_0)=1$. Next we observe that
$h_{k+1}^{\Delta}(t,t_0)=c^{k+1}\frac{\binom{\frac{\sigma(t)-t_0}{c}}{k+1}
-\binom{\frac{t-t_0}{c}}{k+1}}{c}\\
=c^{k}\left(
\binom{\frac{t-t_0}{c}+1}{k+1}-\binom{\frac{t-t_0}{c}}{k}\right)
=c^{k}\binom{\frac{t-t_0}{c}}{k}=h_{k}(t,t_0)$. Hence, by the
principle of mathematical induction, (a) holds for all
$k\in\N\cup\{0\}$. Since $\hat{h}_k(t,t_0)=(-1)^kh_k(t_0,t)$, (b)
is also true.
\end{proof}

\begin{remark}
Let $\T=c\Z$, $c> 0$. From the properties of factorial functions
it follows:
\begin{enumerate}
\item if $t\geq t_0\ \wedge \ k\geq \frac{t-t_0}{c}+1$, then
$h_k(t,t_0)=0$;

\item if $t\leq t_0\ \wedge \ k \geq \frac{|t-t_0|}{c}+1$, then
$\hat{h}_k(t,t_0)=0$.
\end{enumerate}
In particular, when $c=1$ and $\T=\Z$, we have:
\begin{enumerate}
\item if $t\geq t_0\ \wedge \ k> t-t_0$, then $h_k(t,t_0)=0$;
\item if $t\leq t_0\ \wedge \ k> |t-t_0|$, then
$\hat{h}_k(t,t_0)=0$.
\end{enumerate}
\end{remark}

In the next section we need the following results.

\begin{remark}
Let $\T=c\Z$, $c>0$, and $t\in\T$. Then, for $k\in\N\cup\{0\}$, the following holds:

 (a) $\frac{t^{\underline{k+1}}}{t^{\underline{k}}} =t-k, \ t\leq k$;

\bigskip

(b) $\frac{t^{\overline{k+1}}}{t^{\overline{k}}}=t+k, \ t\geq -k$.
\end{remark}

\begin{proposition}
\label{prop3012} Let $\T=c\Z$, $c>0$, $t,t_0\in\T$. Then,

(a) $\lim\limits_{k\rightarrow
\infty}\left|\frac{h_{k+1}(t,t_0)}{h_k(t,t_0)}\right|=c$ \ \ for
$t<t_0$;

(b) $\lim\limits_{k\rightarrow
\infty}\left|\frac{\hat{h}_{k+1}(t,t_0)}{\hat{h}_k(t,t_0)}\right|=c$\
\  for $t>t_0$.
\end{proposition}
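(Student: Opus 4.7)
The plan is to reduce the ratios to an elementary expression by invoking the explicit formulas of the previous proposition, namely
$h_k(t,t_0)=c^k\binom{(t-t_0)/c}{k}$ and $\hat{h}_k(t,t_0)=c^k ((t-t_0)/c)^{\overline{k}}/k!$, so that the factorial in the denominator cancels and one is left with a single factor whose limit is easy to compute.

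For part (a), I would set $x:=(t-t_0)/c$, which is an integer since $t,t_0\in c\Z$. The assumption $t<t_0$ gives $x<0$, so every factor in the falling factorial $x^{\underline{k}}=x(x-1)\cdots(x-k+1)$ is strictly negative, hence $h_k(t,t_0)\neq 0$ and the quotient makes sense. Dividing the two closed forms yields
\[
\frac{h_{k+1}(t,t_0)}{h_k(t,t_0)}=\frac{c}{k+1}\cdot\frac{x^{\underline{k+1}}}{x^{\underline{k}}}.
\]
Now I invoke part (a) of the preceding remark, $x^{\underline{k+1}}/x^{\underline{k}}=x-k$, whose hypothesis $x\leq k$ is automatic here since $x<0\leq k$. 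Therefore $h_{k+1}/h_k=c(x-k)/(k+1)$, and taking absolute values and letting $k\to\infty$ gives $\lim_k|h_{k+1}/h_k|=c$.

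Part (b) is completely symmetric. With $x:=(t-t_0)/c$ now a positive integer, every factor of $x^{\overline{k}}=x(x+1)\cdots(x+k-1)$ is positive, so $\hat{h}_k(t,t_0)\neq 0$. The same cancellation yields
\[
\frac{\hat{h}_{k+1}(t,t_0)}{\hat{h}_k(t,t_0)}=\frac{c}{k+1}\cdot\frac{x^{\overline{k+1}}}{x^{\overline{k}}}=\frac{c(x+k)}{k+1},
\]
where this time I use part (b) of the preceding remark ($x\geq -k$ is automatic because $x>0$). The limit as $k\to\infty$ is again $c$.

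There is no real obstacle here beyond bookkeeping: one must verify that the factorial-ratio identities are applicable (the inequalities $x\leq k$ and $x\geq -k$ must hold) and that the denominators $h_k$ and $\hat h_k$ do not vanish, both of which follow immediately from the sign of $x$ imposed by the hypotheses $t<t_0$ and $t>t_0$. Once the explicit formulas are written down, the rest of the proof is a one-line limit calculation.
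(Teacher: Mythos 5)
Your proposal is correct and follows essentially the same route as the paper: both reduce the quotient to $\frac{h_{k+1}(t,t_0)}{h_k(t,t_0)}=\frac{t-t_0-ck}{k+1}$ (and $\frac{\hat{h}_{k+1}(t,t_0)}{\hat{h}_k(t,t_0)}=\frac{t-t_0+ck}{k+1}$), your expression $\frac{c(x-k)}{k+1}$ with $x=(t-t_0)/c$ being exactly the paper's formula. You merely make explicit the cancellation via the closed forms and the non-vanishing of the denominators, which the paper leaves implicit.
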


\begin{proof}
Let $\T=c\Z$, $c>0$, and $t,t_0\in\T$. For $t<t_0$, it is enough
to notice that
$\frac{h_{k+1}(t,t_0)}{h_k(t,t_0)}=\frac{t-t_0-ck}{k+1}$ to prove
(a). Equality (b) is proved in a similar way: for $t>t_0$, we have:
$\frac{\hat{h}_{k+1}(t,t_0)}{\hat{h}_k(t,_0)}=\frac{t-t_0+ck}{k+1}$.
\end{proof}

\begin{remark}\cite{And}
Let $t_0\in\T^{k^n}$ and $k\in\N\cup\{0\}$. Then,
\begin{equation}\label{rel3}
    \hat{h}_{k+1}^{\nabla}(t_0,t)=-\sum_{j=0}^k\nu^j(t)\hat{h}_{k-j}(t_0,t)
\end{equation}
for $\T_k\cap\T^k$.
\end{remark}
As a consequence of (\ref{rel3}) and equalities $f^{\sigma}= f+\mu
f^{\Delta}$ and $f^{\rho} = f-\nu f^{\nabla}$, the following laws
of differentiation of generalized monomials follow.

\begin{corollary}
\label{derdor1}\mbox{}

(a) $\left[h_{k+1}(t,t_0)\right]^{\nabla} =
\sum_{j=0}^{k}(-1)^j\nu^j(t)h_{k-j}(t,t_0)$;

(b) $\left[\hat{h}_{k+1}(t,t_0)\right]^{\Delta}
=\sum_{j=0}^{k}\mu^j(t)\hat{h}_{k-j}(t,t_0)$.
\end{corollary}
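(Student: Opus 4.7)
The plan is to iterate the transfer identities $f^{\sigma}=f+\mu f^{\Delta}$ and $f^{\rho}=f-\nu f^{\nabla}$ recalled at the start of the proof of Proposition~\ref{diamexpder}, combined with the defining recursions $h_{k+1}^{\Delta}=h_k$ and $\hat h_{k+1}^{\nabla}=\hat h_k$ of the generalized monomials. Both (a) and (b) come out of a one-step recursion followed by induction on $k$; alternatively, (a) can be obtained in one line from~(\ref{rel3}) by duality.

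For (a), under the (implicit) regularity $\sigma(\rho(t))=t$, the $\Delta$--$\nabla$ transfer result from~\cite{Bh} quoted in Section~2 gives
$$h_{k+1}^{\nabla}(t,t_0)=h_{k+1}^{\Delta}(\rho(t),t_0)=h_k(\rho(t),t_0)=h_k^{\rho}(t,t_0),$$
and substituting $f^{\rho}=f-\nu f^{\nabla}$ produces the one-step recursion
$$h_{k+1}^{\nabla}(t,t_0)=h_k(t,t_0)-\nu(t)\,h_k^{\nabla}(t,t_0).$$
The base case $k=0$ is immediate from $h_1(t,t_0)=t-t_0$, and the inductive step consists of substituting the hypothesis $h_k^{\nabla}=\sum_{i=0}^{k-1}(-1)^i\nu^i(t)h_{k-1-i}$ into the recursion and reindexing $i\mapsto j=i+1$ to absorb the extra leading term $h_k$ at $j=0$. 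For (b), the argument is entirely symmetric: under $\rho(\sigma(t))=t$ the transfer theorem gives $\hat h_{k+1}^{\Delta}(t,t_0)=\hat h_k^{\sigma}(t,t_0)$, and $f^{\sigma}=f+\mu f^{\Delta}$ yields $\hat h_{k+1}^{\Delta}=\hat h_k+\mu(t)\,\hat h_k^{\Delta}$; the same telescoping induction, now without alternating signs, delivers (b).

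A slicker derivation of (a), better matching the remark's hint that~(\ref{rel3}) is the key input, uses the duality $\hat h_k(t_0,t)=(-1)^k h_k(t,t_0)$ quoted earlier from~\cite{HP}: differentiating $h_{k+1}(t,t_0)=(-1)^{k+1}\hat h_{k+1}(t_0,t)$ in $t$ and applying~(\ref{rel3}) yields $(-1)^{k+2}\sum_{j=0}^k\nu^j(t)\hat h_{k-j}(t_0,t)$, which collapses to the stated sum via $(-1)^{k+2}\cdot(-1)^{k-j}=(-1)^j$. I would probably present (a) this way and (b) by the inductive telescoping above, since the excerpt does not contain a $\Delta$-companion of~(\ref{rel3}). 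The only real obstacle here is bookkeeping -- tracking signs and reindexing the telescoped sum -- nothing analytic; the regularity conditions $\sigma(\rho(t))=t$ and $\rho(\sigma(t))=t$ hold automatically on the homogeneous time scales the paper emphasizes.
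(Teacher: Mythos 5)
Your proposal is correct and uses exactly the ingredients the paper itself invokes: the paper offers no detailed argument, merely asserting that the corollary is "a consequence of (\ref{rel3}) and equalities $f^{\sigma}= f+\mu f^{\Delta}$ and $f^{\rho} = f-\nu f^{\nabla}$," and both your routes (the duality/sign-collapse via $\hat{h}_k(t,t_0)=(-1)^kh_k(t_0,t)$ for (a), and the telescoping induction through the transfer theorem for (b)) are faithful, correctly executed elaborations of that hint. The sign bookkeeping $(-1)^{k+2}\cdot(-1)^{k-j}=(-1)^j$ and the reindexing in the inductive step both check out.
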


\begin{example}\label{exdor1}
Let $\T$ be an homogenous time scale with
$\mu(t)=\nu(t)=c=const.$, $c\geq 0$. Let us recall that for $c=0$
we have $\T=\R$ and for $c=1$ we have $\T=\Z$. Then,
$\left[h_{k+1}(t,t_0)\right]^{\nabla}
= \sum_{j=0}^{k}(-1)^jc^jh_{k-j}(t,t_0)$,
$\left[\hat{h}_{k+1}(t,t_0)\right]^{\Delta}=
\sum_{j=0}^{k}c^j\hat{h}_{k-j}(t,t_0)$.

For $\T=\R$:
$\left[h_{k+1}(t,t_0)\right]^{\nabla}
=\left[\hat{h}_{k+1}(t,t_0)\right]^{\Delta} =\frac{(t-t_0)^k}{k!},$ \\
for $\T=\Z$:
$\left[h_{k+1}(t,t_0)\right]^{\nabla}=
\sum_{j=0}^{k}(-1)^jh_{k-j}(t,t_0)
=\binom{t-t_0}{0}-\binom{t-t_0}{1} +\cdots+(-1)^k\binom{t-t_0}{k},$
and $\left[\hat{h}_{k+1}(t,t_0)\right]^{\Delta}
=\sum_{j=0}^{k}\hat{h}_{k-j}(t,t_0)$.
\end{example}

\begin{proposition} Let $t,t_0\in\T$. Then,

(a) $h_1^{\diam}(t,t_0)=\hat{h}_1^{\diam}(t,t_0)\equiv 1$;

(b) $h_{k+1}^{\diam}(t,t_0)=h_k(t,t_0)+(1-\al)\sum_{j=1}^k(-1)^j\nu^j(t)h_{k-j}(t,t_0)$;

(c) $ \hat{h}_{k+1}^{\diam}(t,t_0)
=\hat{h}_k(t,t_0)+\al\sum_{j=1}^k\mu^j(t)\hat{h}_{k-j}(t,t_0)$.
\end{proposition}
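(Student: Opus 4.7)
The plan is to reduce everything to Theorem~\ref{thsheng}, which expresses $f^{\diam}$ as the convex combination $\alpha f^{\Delta}+(1-\alpha)f^{\nabla}$, and then to plug in the known $\Delta$- and $\nabla$-derivatives of the monomials $h_k$ and $\hat h_k$. The $\Delta$-derivative of $h_{k+1}$ and the $\nabla$-derivative of $\hat h_{k+1}$ are simply $h_k$ and $\hat h_k$, by the very definition of the recursive monomials recalled in this section; the ``opposite'' derivatives $h_{k+1}^{\nabla}$ and $\hat h_{k+1}^{\Delta}$ are the non-trivial ingredient, and these are precisely what Corollary~\ref{derdor1} supplies.

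For part (a), I would just observe that $h_1(t,t_0)=\hat h_1(t,t_0)=t-t_0$, so both its $\Delta$- and $\nabla$-derivatives equal $1$, and the convex combination gives $h_1^{\diam}=\hat h_1^{\diam}=\alpha\cdot 1+(1-\alpha)\cdot 1=1$.

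For part (b), I would write
\[
h_{k+1}^{\diam}(t,t_0)
=\alpha\,h_{k+1}^{\Delta}(t,t_0)+(1-\alpha)\,h_{k+1}^{\nabla}(t,t_0)
=\alpha\,h_k(t,t_0)+(1-\alpha)\sum_{j=0}^{k}(-1)^j\nu^j(t)h_{k-j}(t,t_0),
\]
using the defining property $h_{k+1}^{\Delta}=h_k$ together with Corollary~\ref{derdor1}(a). The only bookkeeping step is to split off the $j=0$ term of the sum, which equals $h_k(t,t_0)$, and then combine it with the $\alpha h_k(t,t_0)$ term to obtain $\alpha h_k+(1-\alpha)h_k=h_k$. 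The remaining sum from $j=1$ to $k$ gives the stated formula. Part (c) is entirely symmetric: I would use $\hat h_{k+1}^{\nabla}=\hat h_k$ together with Corollary~\ref{derdor1}(b) for $\hat h_{k+1}^{\Delta}$, split off the $j=0$ term (which equals $\hat h_k(t,t_0)$), and combine it with the $(1-\alpha)\hat h_k$ contribution.

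There is essentially no obstacle here: the whole content is in recognising that Theorem~\ref{thsheng} applies (which requires $h_{k+1}$ and $\hat h_{k+1}$ to be simultaneously $\Delta$- and $\nabla$-differentiable on $\T^k_k$, and this follows from their $C_{rd}\cap C_{ld}$ regularity and from the fact that both one-sided derivatives are explicitly computed above). The only mild care needed is in the index manipulation when extracting the $j=0$ term from the sums in Corollary~\ref{derdor1}.
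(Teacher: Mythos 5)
Your proposal is correct and follows essentially the same route as the paper's own proof: apply the convex-combination formula $f^{\diam}=\al f^{\Delta}+(1-\al)f^{\nabla}$, substitute $h_{k+1}^{\Delta}=h_k$ (resp.\ $\hat{h}_{k+1}^{\nabla}=\hat{h}_k$) and Corollary~\ref{derdor1} for the opposite derivative, and peel off the $j=0$ term. The only difference is that you make explicit the differentiability hypothesis needed to invoke Theorem~\ref{thsheng} and spell out part (a), both of which the paper leaves implicit.
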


\begin{proof}
From the definition of $\diam$-derivative and
Corollary~\ref{derdor1}, we have:

$h_{k+1}^{\diam}(t,t_0) =\al
h_{k+1}^{\Delta}(t,t_0)+(1-\al)h_{k+1}^{\nabla}(t,t_0) = \al
h_{k}(t,t_0)+(1-\al)\sum_{j=0}^{k}(-1)^j\nu^j(t)h_{k-j}(t,t_0)=
h_{k}(t,t_0)+(1-\al)\sum_{j=1}^{k}(-1)^j\nu^j(t)h_{k-j}(t,t_0)$.

Next, $\hat{h}_{k+1}^{\diam}(t,t_0)=\al
\hat{h}_{k+1}^{\Delta}(t,t_0)
+(1-\al)\hat{h}_{k+1}^{\nabla}(t,t_0)=
\al
\sum_{j=0}^{k}\mu^j(t)\hat{h}_{k-j}(t,t_0)+(1-\al)\hat{h}_{k}(t,t_0)=
\hat{h}_{k}(t,t_0)+\al\sum_{j=1}^{k}\mu^j(t)\hat{h}_{k-j}(t,t_0)$.
\end{proof}

\begin{theorem}\label{tayth1} {\rm \cite{HP}}
Assume that $f$ is $n+1$ delta-differentiable on  $\T^{k^{n+1}}$.
Let $t_0\in\T^{k^{n}}$, $t\in\T$. Then,
\begin{equation}
\label{tay1}
f(t)= \sum_{k=0}^{n} f^{\Delta^k}(t_0)h_k(t,t_0)+R_n(t,t_0),
\end{equation}
where $R_n(t,t_0)=\int_{t_0}^{t}
f^{\tr^{n+1}}(\tau)h_{n}(t,\sigma(\tau))\Delta\tau$.
\end{theorem}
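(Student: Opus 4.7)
The plan is to prove (\ref{tay1}) by induction on $n$. The base case $n=0$ reads $f(t)=f(t_0)+\int_{t_0}^{t}f^{\Delta}(\tau)\Delta\tau$, which is just the fundamental theorem of $\Delta$-calculus combined with $h_0\equiv 1$.

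For the inductive step, I assume (\ref{tay1}) holds at level $n-1$ for every sufficiently differentiable function, fix $f$ that is $n+1$ times $\Delta$-differentiable, and set
$$R_n(t):=f(t)-\sum_{k=0}^{n}f^{\Delta^k}(t_0)h_k(t,t_0).$$
The target is $R_n(t)=\int_{t_0}^{t}f^{\Delta^{n+1}}(\tau)h_n(t,\sigma(\tau))\Delta\tau$. Since $h_0(t_0,t_0)=1$ and $h_k(t_0,t_0)=\int_{t_0}^{t_0}h_{k-1}(\tau,t_0)\Delta\tau=0$ for $k\geq 1$, one has $R_n(t_0)=0$. Using $h_k^{\Delta}(t,t_0)=h_{k-1}(t,t_0)$ yields
$$R_n^{\Delta}(t)=f^{\Delta}(t)-\sum_{j=0}^{n-1}(f^{\Delta})^{\Delta^j}(t_0)\,h_j(t,t_0),$$
and applying the inductive hypothesis to the function $f^{\Delta}$ (which is $n$ times $\Delta$-differentiable) at order $n-1$ identifies this quantity with $\int_{t_0}^{t}f^{\Delta^{n+1}}(\tau)h_{n-1}(t,\sigma(\tau))\Delta\tau$.

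Integrating $R_n^{\Delta}$ from $t_0$ to $t$ and using $R_n(t_0)=0$ gives
$$R_n(t)=\int_{t_0}^{t}\int_{t_0}^{s}f^{\Delta^{n+1}}(\tau)h_{n-1}(s,\sigma(\tau))\Delta\tau\,\Delta s.$$
To finish, I swap the order of integration (Fubini for $\Delta$-integrals on a time scale), rewriting the region $\{t_0\leq\tau<s\leq t\}$ as $\{\sigma(\tau)\leq s\leq t,\ t_0\leq\tau\leq t\}$, and then collapse the inner integral by the defining recursion $\int_{\sigma(\tau)}^{t}h_{n-1}(s,\sigma(\tau))\Delta s=h_n(t,\sigma(\tau))$, which produces exactly the claimed expression for $R_n(t)$. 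The main obstacle is the legitimacy of the Fubini-type exchange on a general $\T$: one must be careful with the $\sigma(\tau)$ appearing in the inner lower bound and with isolated points, although the corresponding time-scales Fubini result is well documented in the literature. Once it is in hand, each remaining step is a routine manipulation of the recursive definitions together with the inductive hypothesis.
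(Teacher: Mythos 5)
The paper does not actually prove Theorem~\ref{tayth1}: it is imported verbatim from \cite{HP}, so there is no internal proof to compare against, and your argument has to be judged on its own. It is correct, and it takes a genuinely different route from the standard ones. The source \cite{HP} obtains the formula from Cauchy functions (identifying $h_n(t,\sigma(\tau))$ as the Cauchy function of $y^{\Delta^{n+1}}=0$), and the textbook proof in Bohner and Peterson runs the induction in the \emph{second} variable, via integration by parts and the identity $\tfrac{\partial}{\Delta s}h_{k}(t,s)=-h_{k-1}(t,\sigma(s))$; there the only nontrivial ingredient is that second-variable derivative formula. You instead differentiate the remainder in the first variable, apply the inductive hypothesis to $f^{\Delta}$, and integrate back; the price for avoiding all second-variable calculus is the Fubini-type interchange $\int_{t_0}^{t}\int_{t_0}^{s}(\cdot)\,\Delta\tau\,\Delta s=\int_{t_0}^{t}\int_{\sigma(\tau)}^{t}(\cdot)\,\Delta s\,\Delta\tau$, which you correctly single out as the crux and which is indeed standard (note the region really is $t_0\le\tau<s<t$, since delta integrals run over half-open intervals, and this is exactly what produces the lower limit $\sigma(\tau)$ and lets the inner integral collapse to $h_n(t,\sigma(\tau))$ by the defining recursion). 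Two small points would make the write-up airtight: the inductive hypothesis should be invoked for $f^{\Delta}$ on the time scale $\T^{k}$, which is what makes the domain hypotheses line up ($f^{\Delta}$ is $n$ times differentiable on $(\T^{k})^{k^{n}}=\T^{k^{n+1}}$ and $t_0\in(\T^{k})^{k^{n-1}}=\T^{k^{n}}$); and recovering $R_n$ from $R_n^{\Delta}$ by integration needs $f^{\Delta^{n+1}}$ to be rd-continuous, a regularity assumption implicit in the statement (and in \cite{HP}) but worth recording. Neither is a gap; with the Fubini lemma granted, your proof is complete.
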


\begin{theorem} \label{tayth2}{\rm \cite{And}} Assume that
$f$ is $n+1$ times nabla differentiable on $\T_{k^{n+1}}$. Let
$t_0\in\T_{k^{n}}$, $t\in\T$. Then,
\begin{equation}\label{tay2}
f(t)= \sum_{k=0}^{n}
f^{\nabla^k}(t_0)\hat{h}_k(t,t_0)
+\hat{R}_n(t,t_0),
\end{equation}
where $\hat{R}_n(t,t_0)=\int_{t_0}^{t}
f^{\nabla^{n+1}}(\tau)h_{n}(t,\rho(\tau))\nabla\tau$.
\end{theorem}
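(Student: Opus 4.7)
The plan is to prove the formula by induction on $n$, using nabla integration by parts at each step. The only technical ingredient beyond what is stated in the excerpt is a derivative formula for $\hat{h}_n$ in its \emph{second} argument, and I would develop that identity as a preliminary lemma before entering the induction.

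For the base case $n = 0$, the right-hand side collapses to $f(t_0) + \int_{t_0}^t f^{\nabla}(\tau)\,\nabla\tau$, which equals $f(t)$ by the fundamental theorem of nabla calculus, since $f^{\nabla} \in C_{ld}(\T,\R)$ by hypothesis. For the inductive step, assuming the formula at level $n-1$, it suffices to prove the one-step recursion $\hat{R}_{n-1}(t,t_0) = f^{\nabla^n}(t_0)\hat{h}_n(t,t_0) + \hat{R}_n(t,t_0)$. I would establish this by applying nabla integration by parts, obtained from the product rule $(uv)^{\nabla} = u\, v^{\nabla} + u^{\nabla} v^{\rho}$, with $u(\tau) = f^{\nabla^n}(\tau)$ and $v(\tau) = -\hat{h}_n(t,\tau)$, where $t$ is fixed and the derivative in $v$ is taken with respect to $\tau$. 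The boundary contribution at $\tau = t$ vanishes because $\hat{h}_n(t,t) = 0$ for $n \geq 1$; the contribution at $\tau = t_0$ supplies the new Taylor term $f^{\nabla^n}(t_0)\hat{h}_n(t,t_0)$; and what remains of the integral is precisely $\hat{R}_n(t,t_0)$.

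The delicate step, and really the only obstacle, is the second-slot derivative identity $\hat{h}_n^{\nabla_\tau}(t,\tau) = -\hat{h}_{n-1}(t,\rho(\tau))$, which must hold pointwise wherever integration is carried out. I would derive it by combining the symmetry $\hat{h}_k(t,s) = (-1)^k h_k(s,t)$, recorded in the excerpt from \cite{HP}, with the conversion rule $g^{\nabla}(\tau) = g^{\Delta}(\rho(\tau))$ valid under $\sigma(\rho(\tau)) = \tau$, applied to $g(\tau) = h_n(\tau,t)$. This yields $h_n^{\nabla_\tau}(\tau,t) = h_{n-1}(\rho(\tau),t)$; multiplying by $(-1)^n$ and re-expressing $h_{n-1}$ back through the symmetry gives the claimed formula. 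The hypotheses that $f$ is $(n+1)$-times nabla-differentiable on $\T_{k^{n+1}}$ and $t_0 \in \T_{k^n}$ ensure that every integrand appearing in the induction is ld-continuous, so all nabla-integrals are well defined throughout the argument.
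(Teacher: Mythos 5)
First, be aware that the paper offers no proof of this theorem: it is quoted verbatim from \cite{And}, so your argument can only be compared with the standard proof there. Your overall strategy --- induction on $n$, nabla integration by parts via $(uv)^{\nabla}=u\,v^{\nabla}+u^{\nabla}v^{\rho}$, and a second-slot derivative lemma for $\hat{h}_n$ --- is exactly Anderson's. But there is a concrete mismatch between what your induction produces and what the displayed statement asserts. Integration by parts with $v(\tau)=-\hat{h}_n(t,\tau)$ leaves the integral $\int_{t_0}^{t}f^{\nabla^{n+1}}(\tau)\,\hat{h}_n(t,\rho(\tau))\,\nabla\tau$, whose kernel is the \emph{nabla} monomial $\hat{h}_n$, whereas the remainder as printed uses the \emph{delta} monomial $h_n(t,\rho(\tau))$. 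These differ for $n\geq 2$: on $\T=\Z$ one has $\hat{h}_2(t,\tau-1)=\frac{(t-\tau+1)(t-\tau+2)}{2}$ but $h_2(t,\tau-1)=\frac{(t-\tau+1)(t-\tau)}{2}$, and taking $f=\hat{h}_3(\cdot,0)$, $t_0=0$, $n=2$ shows that only the hatted kernel reproduces $f(t)$. So your closing claim that ``what remains of the integral is precisely $\hat{R}_n(t,t_0)$'' is false for $\hat{R}_n$ as defined in the statement; what your argument actually proves is Anderson's formula with $\hat{h}_n(t,\rho(\tau))$. You must either identify the statement's $h_n$ as a misprint (it is one --- \cite{And} has $\hat{h}_n$ there) or accept that the proof does not establish the printed formula.

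The second problem is in your preliminary lemma. The identity $\hat{h}_n^{\nabla_\tau}(t,\tau)=-\hat{h}_{n-1}(t,\rho(\tau))$ is indeed the crux, but you derive it from the conversion rule $g^{\nabla}(\tau)=g^{\Delta}(\rho(\tau))$, which --- as the paper's own quoted theorem from \cite{Bh} makes explicit --- holds only at points $\tau$ with $\sigma(\rho(\tau))=\tau$. That condition fails precisely at left-dense, right-scattered points, while the theorem is asserted for an arbitrary time scale; on a general $\T$ your lemma is therefore established only on part of the interval of integration. The identity must instead be proved directly, by a separate induction on $n$ in the second variable (this is what \cite{And} does, and it is the content the present paper quotes as its equation \eqref{rel3}). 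A smaller point: $(n+1)$-fold nabla differentiability of $f$ does not make $f^{\nabla^{n+1}}$ ld-continuous, so the existence of $\hat{R}_n$ is not automatic from your regularity remark; it should be extracted from the decomposition $u^{\nabla}v^{\rho}=(uv)^{\nabla}-u\,v^{\nabla}$, whose two terms are nabla integrable (the first trivially, the second by the induction hypothesis).
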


By a polynomial real series we usually understand a series of the
form $\Sigma_{k=0}^{\infty} a_kP_k(t)$, where $(P_k(t))_{n \in
\N}$ is a given sequence of polynomials in the variable $t$ and
$(a_k)_{k \in \N}$ is a given sequence of real numbers. In the
continuous case one has $P_k(t)=\frac{(t-t_0)^k}{k!}$. For the
time scales we are considering in this paper, we have
$P_k(t)=h_k(t,t_0)$ or $\hat{P}_k(t)=\hat{h}_k(t,t_0)$, and we
speak about \emph{generalized power series on time scales}
\cite{BG,MP2,MP3}.

\begin{definition}
\label{szereg} Let $\T$ be a time scale and let us fix $t_0\in\T$.
By a \emph{$\Delta$-polynomial series (on $\T$, originated at
$t_0$)} we shall mean the expression
$\sum_{k=0}^{\infty}a_kh_k(t,t_0)$, $t\in \T$; by a
\emph{$\nabla$-polynomial series (on $\T$, originated at $t_0$)}
we mean $\sum_{k=0}^{\infty}a_k\hat{h}_k(t,t_0)$, $t\in \T$, where
for each $k\in\N$, $a_k\in\R$. The sequence $(a_n)_{n\in\N}$ is
called the corresponding sequence of the series.
\end{definition}

\begin{remark}
For any fixed $t, t_0\in\T$, both type of series become ordinary
number series. If they are convergent for $t$ we say that the
polynomial series is convergent at $t$.
\end{remark}

If $\T=\Z$, then for each $t^*\in\Z$, $t^*\geq t_0$, the number
series $\sum_{k=0}^{\infty}a_kh_k(t^*,t_0)$ is convergent because
it is  finite. The same situation we have when $t^*\leq t_0$:
$\sum_{k=0}^{\infty}a_k\hat{h}_k(t^*,t_0)$ is finite, so
convergent.

In \cite{BG} and \cite{MP2} it is proved the following:
\begin{proposition}
Let $t_0\in\T$. If the power series
$\sum\limits_{k=0}^{\infty}a_k\frac{(t-t_0)^k}{k!}$ with the
corresponding sequence of coefficients $(a_k)_{k\in \N_0}$ is
convergent at $t^*\in\T$ and $t^*\geq t_0$, then the polynomial
series is convergent for all values of $t\in\T$ such that
$t_0<t<t^*\in\T$.
\end{proposition}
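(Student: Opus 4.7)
The plan is to compare the generalized polynomial series termwise with the classical series $\sum_{k=0}^{\infty}a_{k}(t-t_{0})^{k}/k!$ and then invoke the standard radius-of-convergence theorem. I read ``the polynomial series'' in the statement as the $\Delta$-polynomial series $\sum_{k=0}^{\infty}a_{k}h_{k}(t,t_{0})$ of Definition~\ref{szereg}, since the hypothesis $t^{*}\geq t_{0}$ and the conclusion $t_{0}<t<t^{*}$ match the orientation of the forward monomials $h_{k}(\cdot,t_{0})$.

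The key estimate I would establish first is
\begin{equation*}
0\leq h_{k}(t,t_{0})\leq \frac{(t-t_{0})^{k}}{k!},\qquad t\in\T,\ t\geq t_{0},\ k\in\N\cup\{0\},
\end{equation*}
by induction on $k$. The case $k=0$ is trivial. Assuming the inequality at level $k$, non-negativity of $h_{k}(\cdot,t_{0})$ on $[t_{0},\infty)\cap\T$ (immediate from the integral recursion) forces $h_{k+1}(t,t_{0})\geq 0$, and
\begin{equation*}
h_{k+1}(t,t_{0})=\int_{t_{0}}^{t}h_{k}(\tau,t_{0})\Delta\tau\leq\int_{t_{0}}^{t}\frac{(\tau-t_{0})^{k}}{k!}\Delta\tau\leq\frac{(t-t_{0})^{k+1}}{(k+1)!}.
\end{equation*}
The final inequality is the only delicate point: since $\tau\mapsto(\tau-t_{0})^{k}/k!$ is non-decreasing on $[t_{0},\infty)$, each left-endpoint contribution $(\sigma(\tau)-\tau)(\tau-t_{0})^{k}/k!$ to a $\Delta$-Riemann sum is dominated by the classical Riemann integral $\int_{\tau}^{\sigma(\tau)}(s-t_{0})^{k}/k!\,ds$ of the continuous extension, and summing yields the comparison with $(t-t_{0})^{k+1}/(k+1)!$.

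Once the estimate is in hand, the conclusion is bookkeeping. The classical Cauchy--Hadamard theorem, applied to $\sum_{k=0}^{\infty}(a_{k}/k!)x^{k}$, says that convergence at $x=t^{*}-t_{0}\geq 0$ forces absolute convergence for every $x$ with $|x|<t^{*}-t_{0}$. In particular, for any $t\in\T$ with $t_{0}<t<t^{*}$, the real series $\sum_{k=0}^{\infty}|a_{k}|(t-t_{0})^{k}/k!$ is finite. Combining with the monomial bound gives
\begin{equation*}
\sum_{k=0}^{\infty}\bigl|a_{k}h_{k}(t,t_{0})\bigr|\leq\sum_{k=0}^{\infty}|a_{k}|\frac{(t-t_{0})^{k}}{k!}<\infty,
\end{equation*}
so by the Weierstrass comparison test the $\Delta$-polynomial series converges absolutely, and therefore converges.

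The only non-routine ingredient is the pointwise estimate $h_{k}(t,t_{0})\leq(t-t_{0})^{k}/k!$, which is not proved in the excerpt; the inductive step above is standard in time-scales Taylor theory but hinges on the $\Delta$-integral/Riemann-integral comparison for monotone integrands. Everything after that is classical.
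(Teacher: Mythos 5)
Your argument is correct: the bound $0\leq h_k(t,t_0)\leq \frac{(t-t_0)^k}{k!}$ for $t\geq t_0$ (with the inductive step justified by comparing the $\Delta$-integral of a non-decreasing integrand with the ordinary Riemann integral), combined with absolute convergence of the classical power series strictly inside its interval of convergence, is exactly the standard route. The paper itself gives no proof here---it defers to the cited references---and your argument is essentially the one used in those sources, so there is nothing to flag.
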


Two polynomial series of the same type can be added
and multiplied by scalars giving the same type of series. We can
define the $\Delta$-derivative of $\Delta$-polynomial series:
$\left(\sum_{k=0}^{\infty}a_{k}h_k(t,t_0)\right)^{\Delta}=
\sum_{k=0}^{\infty}a_{k+1}h_k(t,t_0)$. Similarly, we have the
$\nabla$-derivative of $\nabla$-polynomial series in the form
$\left(\sum_{k=0}^{\infty}a_{k}\hat{h}_k(t,t_0)\right)^{\nabla}=
\sum_{k=0}^{\infty}a_{k+1}\hat{h}_k(t,t_0)$. Additionally, if the
$\Delta$-polynomial series is convergent for $t\in\langle
t_0,t^*)\cap\T$ and if the $\nabla$-polynomial series is
convergent for $t\in (t^*,t_0\rangle\cap\T$, then their
derivatives are also convergent on the same sets. From
Corollary~\ref{derdor1} we obtain the following result.

\begin{proposition}
\label{derser} Let $ t_0\in\T$, $M>0$, and $\left(a_k\right)_{k\in\N\cup\{0\}}$ be a
sequence such that $|a_k|\leq M^k$ for each $k$. We have:

(a) Let $I=\{t\in\T: \rho(t)\geq t_0 \ \wedge \ \nu(t)M<1\}$. Then, the series $\sum_{k=0}^{\infty}a_{k}h_k(t,t_0)$ is
convergent for $t\in I$, and
$\left(\sum_{k=0}^{\infty}a_{k}h_k(t,t_0)\right)^{\nabla}
=\sum_{k=0}^{\infty}\left(\sum_{j=0}^{\infty}
(-1)^j\nu^j(t)a_{j+k+1}\right)h_k(t,t_0)$
exists and it is convergent for $t\in I$.

(b) Let $J=\{t\in\T: \sigma(t)\leq t_0 \ \wedge \ \mu(t)M<1\}$. Then, the series $\sum_{k=0}^{\infty}a_{k}\hat{h}_k(t,t_0)$ is
convergent for $t\in J$, and
$\left(\sum_{k=0}^{\infty}a_{k}\hat{h}_k(t,t_0)\right)^{\Delta}
=\sum_{k=0}^{\infty}\left(\sum_{j=0}^{\infty}
\mu^j(t)a_{j+k+1}\right)\hat{h}_k(t,t_0)$
exists and it is convergent for $t\in J$.
\end{proposition}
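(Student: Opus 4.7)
Parts (a) and (b) are dual (swap $h\leftrightarrow\hat{h}$, $\Delta\leftrightarrow\nabla$, $\nu\leftrightarrow\mu$ and use the other half of Corollary~\ref{derdor1}), so my plan is to prove (a) in full and obtain (b) by the analogous argument. Fix $t\in I$ and set $S(t):=\sum_{k=0}^{\infty} a_k h_k(t,t_0)$. First I would establish absolute convergence of $S$ on $I$ via the standard monomial bound $0\le h_k(t,t_0)\le (t-t_0)^k/k!$ for $t\ge t_0$, proved inductively from $h_{k+1}(t,t_0)=\int_{t_0}^{t}h_k(\tau,t_0)\Delta\tau$. Because $\rho(t)\ge t_0$ forces $t\ge t_0$ (and gives the same estimate at $\rho(t)$), the hypothesis $|a_k|\le M^k$ yields $|a_k h_k(t,t_0)|\le (M(t-t_0))^k/k!$, so both $S(t)$ and $S(\rho(t))$ converge absolutely by comparison with $e^{M(t-t_0)}$.

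Next I would apply Corollary~\ref{derdor1}(a) to each term. Since $h_0^{\nabla}\equiv 0$, formal termwise nabla differentiation gives
\[
\sum_{k\ge 0}a_{k+1}h_{k+1}^{\nabla}(t,t_0)=\sum_{k\ge 0}a_{k+1}\sum_{j=0}^{k}(-1)^j\nu^j(t)h_{k-j}(t,t_0),
\]
and the substitution $i:=k-j$ converts the index set $\{0\le j\le k,\,k\ge 0\}$ into $\{i,j\ge 0\}$ with $k=i+j$, producing
\[
\sum_{i\ge 0}\left(\sum_{j\ge 0}(-1)^j\nu^j(t)a_{i+j+1}\right)h_i(t,t_0),
\]
which is the expression claimed by the proposition.

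To legitimise both the termwise $\nabla$ and the rearrangement I would invoke absolute convergence of the doubly indexed series: the crude bound
\[
\sum_{i,j\ge 0} M^{i+j+1}\nu^j(t)\frac{(t-t_0)^i}{i!}=\frac{M\,e^{M(t-t_0)}}{1-\nu(t)M}<\infty
\]
holds precisely because $\nu(t)M<1$, so a Fubini-type rearrangement for nonnegative summands applies; the inner sum is in particular bounded by $M^{i+1}/(1-\nu(t)M)$, giving convergence of the outer series on $I$ by another comparison with $e^{M(t-t_0)}$. The main obstacle I anticipate is justifying termwise $\nabla$-differentiation at left-dense $t\in I$: at a left-scattered $t$ the identity $S^{\nabla}(t)=(S(t)-S(\rho(t)))/\nu(t)$ immediately permits termwise differentiation from the convergences above, but at a left-dense $t$ one has $\nu(t)=0$ and a genuine one-sided limit, so I would argue via uniform convergence of $\sum a_k h_k(s,t_0)$ on a one-sided neighbourhood of $t$ (using the same exponential majorant) together with a Weierstrass-type termwise differentiation theorem.
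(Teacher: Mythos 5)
Your argument is correct and follows exactly the route the paper intends: the paper offers no written proof of Proposition~\ref{derser}, only the preceding remark that it is obtained ``from Corollary~\ref{derdor1}'', and your termwise application of that corollary together with the re-indexing $i=k-j$ and the geometric-series bound $\sum_{j}(\nu(t)M)^j=\frac{1}{1-\nu(t)M}$ (valid precisely because $\nu(t)M<1$) is the natural way to fill that in. Your write-up is in fact more careful than the source --- the exponential majorant $h_k(t,t_0)\le (t-t_0)^k/k!$, the Fubini rearrangement, and the separate treatment of left-dense points (where one also needs that $\nu(s)\to 0$ as $s\to t^-$ to make the majorant uniform) are all left implicit in the paper.
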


\begin{remark}
Let $\T=c\Z$, $c>0$. There is no problem with convergence (i) in
points $t\geq t_0$ for series of the first type, (ii) at points
$t\leq t_0$ for series of the second (``hat'') type, because such
series are finite.
\end{remark}

In \cite{BG} and \cite{MP2} one can find generalized series for an
exponential $e_p(t,t_0)$ with constant function $p(t)\equiv p$:
for $t\in\T$ and $t\geq t_0$ one has
$e_p(t,t_0)=\sum_{k=0}^{\infty}p^kh_k(t,t_0).$
It follows that $e_p^{\Delta}(t,t_0)
=p\sum_{k=0}^{\infty}p^kh_k(t,t_0)$, which gives the rule
$e_p^{\Delta}(t,t_0)=pe_p(t,t_0)$.

As in Proposition~\ref{diamexpder}, let us consider now a time
scale with $\sigma(\rho(t))=t$. Then,
\begin{equation}
e_p^{\nabla}(t,t_0)
=\sum_{k=0}^{\infty}p^{k+1}\left(\sum_{j=0}^{\infty}
(-1)^j\nu^j(t)p^{j}\right)h_k(t,t_0) \, ,
\end{equation}
where
$\sum_{j=0}^{\infty}(-1)^j\nu^j(t)p^{j}=\frac{1}{1+p\nu(t)}$ if
$|p\nu(t)|<1$. This gives that
\begin{equation}
e_p^{\nabla}(t,t_0)=\frac{p}{1+p\nu(t)}\sum_{k=0}^{\infty}
p^kh_k(t,t_0)=\frac{p}{1+p\nu(t)}e_p(t,t_0)\, ,
\end{equation}
and then $e_p^{\diam}(t,t_0)$ is as in
Proposition~\ref{diamexpder}.

In \cite{MP3} it is proved that
$\hat{e}_p(t,t_0)=\sum_{k=0}^{\infty}p^k\hat{h}_k(t,t_0)$ for
$t\geq t_0$. Then, we have that
$\hat{e}_p^{\nabla}(t,t_0)=p\hat{e}_p(t,t_0)$. We obtain that the
$\Delta$-derivative of $\hat{e}_p(\cdot, t_0)$ with respect to $t$ is given by the formula
$\hat{e}_p^{\Delta}(t,t_0)=\sum_{k=0}^{\infty}
p^{k+1}\left(\sum_{j=0}^{\infty}\mu^j(t)p^{j}\right)\hat{h}_k(t,t_0)$,
where $\sum_{j=0}^{\infty}\mu^j(t)p^{j}=\frac{1}{1-p\mu(t)}$ if
$|p\mu(t)|<1$. This gives that
$\hat{e}_p^{\Delta}(t,t_0)=\frac{p}{1-p\mu(t)}
\sum_{k=0}^{\infty}p^k\hat{h}_k(t,t_0)
=\frac{p}{1-p\mu(t)}\hat{e}_p(t,t_0)$
and then $\hat{e}_p^{\diam}(t,t_0)$ is also as in
Proposition~\ref{diamexpder}.


\section{Combined series}

The diamond-$\alpha$ derivative reduces to the standard $\Delta$
derivative for $\alpha=1$ and to the standard $\nabla$ derivative
for $\alpha=0$. The same ``weighted'' type definition is proposed
for the diamond-$\alpha$ integral. Based on this simple idea, we
introduce diamond type monomials.
Let us begin with the trivial remark that for any $f:\T\rightarrow
\R$ we can write $f(t)=\alpha f(t)+(1-\alpha)f(t)$.

\begin{theorem}
Assume that $f$ is $n+1$ delta- and nabla-differentiable on
$\T^{k^{n+1}}$ and $\T_{k^{n+1}}$, respectively. Let
$t_0\in\T_{k^n}\cap\T^{k^{n}}$, $t\in\T$. Then,
\begin{equation*}
f(t)= \alpha S_n(t,t_0)+(1-\alpha)\hat{S}_n(t,t_0)+
\tilde{R}_n(t,t_0),
\end{equation*}
where
$S_n(t,t_0)=\sum_{k=0}^{n} f^{\Delta^k}(t_0)h_k(t,t_0) \, ,
\hat{S}_n(t,t_0) = \sum_{k=0}^{n}
f^{\nabla^k}(t_0)\hat{h}_k(t,t_0) \, $, and
$\tilde{R}_n(t,t_0)=\alpha R_n(t,t_0) + (1-\alpha)\hat{R}_n(t,t_0)
\, ,$
with remainders $R_n(t,t_0)$ and $\hat{R}_n(t,t_0)$ given as in
Theorems~\ref{tayth1} and \ref{tayth2}.
\end{theorem}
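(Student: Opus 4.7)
The plan is to exploit the trivial identity $f(t)=\alpha f(t)+(1-\alpha)f(t)$ highlighted in the paragraph preceding the theorem, and then apply the two already-established Taylor expansions of Theorems~\ref{tayth1} and~\ref{tayth2} to the two copies of $f(t)$ separately. Since the hypotheses on $f$ (delta- and nabla-differentiability up to order $n+1$ on $\T^{k^{n+1}}$ and $\T_{k^{n+1}}$, respectively) and the assumption $t_0\in\T_{k^n}\cap\T^{k^n}$ are exactly what is needed to invoke both Taylor theorems simultaneously, this reduction is permissible.

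First I would write
\begin{equation*}
f(t)=\alpha f(t)+(1-\alpha)f(t).
\end{equation*}
To the first occurrence of $f(t)$ I would apply Theorem~\ref{tayth1} to obtain
\begin{equation*}
f(t)=\sum_{k=0}^n f^{\Delta^k}(t_0)h_k(t,t_0)+R_n(t,t_0)=S_n(t,t_0)+R_n(t,t_0),
\end{equation*}
and to the second occurrence I would apply Theorem~\ref{tayth2} to obtain
\begin{equation*}
f(t)=\sum_{k=0}^n f^{\nabla^k}(t_0)\hat{h}_k(t,t_0)+\hat{R}_n(t,t_0)=\hat{S}_n(t,t_0)+\hat{R}_n(t,t_0).
\end{equation*}

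Substituting these two expressions back into the trivial decomposition and collecting the Taylor polynomial parts separately from the remainder parts gives
\begin{equation*}
f(t)=\alpha\bigl[S_n(t,t_0)+R_n(t,t_0)\bigr]+(1-\alpha)\bigl[\hat{S}_n(t,t_0)+\hat{R}_n(t,t_0)\bigr],
\end{equation*}
which, upon recognizing $\tilde{R}_n(t,t_0)=\alpha R_n(t,t_0)+(1-\alpha)\hat{R}_n(t,t_0)$, is precisely the asserted formula.

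There is essentially no obstacle: the result is a direct consequence of the convex-combination philosophy underlying the diamond-$\alpha$ calculus, and the hypotheses have been chosen so that both Taylor formulas apply at the same $t_0$. The only care needed is bookkeeping the domains of differentiability ($\T^{k^{n+1}}$ for the delta case, $\T_{k^{n+1}}$ for the nabla case) to ensure both expansions are simultaneously valid at the chosen $t_0$, and this is guaranteed by the hypothesis $t_0\in\T_{k^n}\cap\T^{k^n}$.
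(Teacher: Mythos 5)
Your proof is correct and follows exactly the route the paper intends: the paper offers no explicit proof, but its preceding remark that $f(t)=\alpha f(t)+(1-\alpha)f(t)$ signals precisely your argument of applying Theorems~\ref{tayth1} and~\ref{tayth2} to the two copies and combining. Nothing is missing.
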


\begin{definition}\label{diam_ser}
Let $\T$ be a time scale and $t_0\in\T$. By a
\emph{combined-polynomial series (on $\T$, originated at $t_0$)}
we shall mean the expression
\begin{equation}
\label{diam_ser1}
S_{\al}(t,t_0):=\al\sum_{k=0}^{\infty}a_kh_k(t,t_0)
+(1-\al)\sum_{k=0}^{\infty}b_k\hat{h}_k(t,t_0),
\end{equation}
where $t\in\T$ and $\al\in[0,1]$.
\end{definition}

\begin{remark}
If in (\ref{diam_ser1}) we put $\al=1$, then we have a
$\Delta$-polynomial series. For $\al=0$ we obtain
$\nabla$-polynomial series. A combined-series is convergent if
both types of polynomial series are convergent. For fixed
$t,t_0\in\T$ we get usual number series, so we can say that the
series originated at $t_0$ is convergent at $t$ if it is
convergent as a number series.
\end{remark}

\begin{proposition}
Let $\T=c\Z$, $c>0$, and $(a_0,a_1,\ldots)$, $(b_0,b_1,\ldots)$ be
two real sequences with nonzero elements such that
$\lim\limits_{k\rightarrow\infty}\left|\frac{a_{k+1}}{a_k}\right|
<\frac{1}{c}$,
$\lim\limits_{k\rightarrow\infty}\left|\frac{b_{k+1}}{b_k}\right|
<\frac{1}{c}$. Then, the combined-polynomial series
$$S_{\al}(t,t_0)=\al\sum_{k=0}^{\infty}a_kh_k(t,t_0)
+(1-\al)\sum_{k=0}^{\infty}b_k\hat{h}_k(t,t_0) \, ,$$
$\al\in[0,1]$, is convergent for all $t\in\T$.
\end{proposition}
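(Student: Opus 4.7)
The plan is to show that each of the two constituent series in $S_\al(t,t_0)$ converges separately, since the combined-polynomial series is convergent whenever both the $\Delta$-series and the $\nabla$-series are. I would fix $t, t_0 \in \T = c\Z$ and split into cases according to the sign of $t - t_0$, relying on the Remark that tells us $h_k(t,t_0) = 0$ eventually whenever $t \geq t_0$ (so the $\Delta$-series reduces to a finite sum) and $\hat{h}_k(t,t_0) = 0$ eventually whenever $t \leq t_0$ (so the $\nabla$-series reduces to a finite sum). This immediately handles convergence in one of the two series in each case, leaving only the ``nontrivial'' side to handle.

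For the nontrivial side, I would invoke the ratio test on the resulting number series. Concretely, in the case $t < t_0$ I need to control $\sum_{k=0}^\infty a_k h_k(t,t_0)$; by Proposition~\ref{prop3012}(a) we have $\lim_{k\to\infty}|h_{k+1}(t,t_0)/h_k(t,t_0)| = c$, so
\begin{equation*}
\lim_{k\to\infty}\left|\frac{a_{k+1}h_{k+1}(t,t_0)}{a_k h_k(t,t_0)}\right|
= c \cdot \lim_{k\to\infty}\left|\frac{a_{k+1}}{a_k}\right| < c \cdot \frac{1}{c} = 1,
\end{equation*}
and the ratio test gives absolute convergence. Symmetrically, in the case $t > t_0$ I would apply Proposition~\ref{prop3012}(b) together with the hypothesis on $(b_k)$ to conclude convergence of $\sum_{k=0}^\infty b_k \hat{h}_k(t,t_0)$. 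The degenerate case $t = t_0$ is trivial since $h_k(t_0,t_0) = \hat{h}_k(t_0,t_0) = 0$ for all $k \geq 1$.

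Putting these cases together, for every $t \in \T$ both $\sum a_k h_k(t,t_0)$ and $\sum b_k \hat{h}_k(t,t_0)$ converge (each either as a finite sum or via the ratio test), so their $\al$-weighted combination $S_\al(t,t_0)$ converges for every $\al \in [0,1]$ and every $t \in \T$. There is no real obstacle here; the only point requiring a bit of care is to notice that ``finite sum convergence'' covers exactly the side of the split where Proposition~\ref{prop3012} does not apply, so the two tools dovetail neatly to cover all $t \in \T$.
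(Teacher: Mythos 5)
Your proposal is correct and follows essentially the same route as the paper's own (much terser) proof: a case split on the sign of $t-t_0$, with one of the two series reducing to a finite sum and the other handled via the ratio test through Proposition~\ref{prop3012}. The only difference is that you spell out the ratio-test computation explicitly, which the paper leaves implicit.
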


\begin{proof}
Based on Proposition \ref{prop3012}, we consider: $t=t_0$, when
combined-series $S_{\al}(t_0,t_0)=\al a_0+(1-\al)b_0$; $t>t_0$, when
the first part is finite the second is convergent; $t<t_0$, when
we have opposite situation to the previous one.
\end{proof}

\begin{example}
Let $\T=\Z$ and $f(t)=2^t$. Then, $f^{\Delta^k}(t)=2^t$ and
$f^{\Delta^k}(0)=1$ for $k\in\N\cup\{0\}$. Additionally,
$2^t=\sum_{k=0}^{\infty}h_k(t,0)=\sum_{k=0}^t\binom{t}{k}$ for any
$t\geq 0$. But this series is not convergent for $t<0$. We have
$f^{\nabla^k}(t)=2^{t-k}$ and $f^{\nabla^k}(0)=2^{-k}$. The series
$\sum_{k=0}^{\infty}\frac{1}{2^k}\hat{h}_k(t,0)$ is convergent for
any $t\in\Z$. For that let
$c_k(t)=\frac{1}{2^k}\hat{h}_k(t,0)=\frac{t^{\bar{k}}}{2^kk!}$.
Then, $$\lim\limits_{k\rightarrow\infty}\frac{c_{k+1}(t)}{c_k(t)}
=\lim\limits_{k\rightarrow\infty}
\frac{(t+1)(t+2)\cdots(t+k)}{2^{k+1}(k+1)!}
\cdot\frac{2^kk!}{t(t+1)(t+2)\cdots(t+k-1)}
=\lim\limits_{k\rightarrow\infty}\frac{k+t}{2(k+1)t}=\frac{1}{2}$$
for each fixed $t>0$. The combined-polynomial series has the form $\al\sum_{k=0}^{\infty}\frac{t^{\underline{k}}}{k!}
+(1-\al)\sum_{k=0}^{\infty}\frac{1}{2^k}\frac{t^{\overline{k}}}{k!}$
and is convergent for $t\geq 0$.
\end{example}


\section{Conclusions}

Polynomial series have been used in the literature for solving a
variety of problems in control. In this paper we define Taylor
series via diamond-alpha derivatives on time scales and provide
the first steps on the correspondent theory. Such a theory
provides a general framework that is valid for discrete,
continuous or hybrid series. We trust that the polynomial series
here introduced are important in the analysis of control systems
on time scales.


\section*{Acknowledgments}

The first author was supported by
Bia{\l}ystok Technical University grant S/WI/1/07;
the second author by the R\&D unit CEOC,
via FCT and the EC fund FEDER/POCI 2010.




\begin{thebibliography}{10}

\bibitem{AB}
R.~P. Agarwal and M.~Bohner.
\newblock Basic calculus on time scales and some of its applications.
\newblock {\em Results Math.}, 35(1-2):3--22, 1999.

\bibitem{And}
D.~R. Anderson.
\newblock Taylor polynomials for nabla dynamic equations on time scales.
\newblock {\em Panamer. Math. J.}, 12(4):17--27, 2002.

\bibitem{Zbig07}
Z.~Bartosiewicz, {\"U}.~Kotta, E.~Paw{\l}uszewicz, and M.~Wyrwas.
\newblock Algebraic formalism of differential one-forms for nonlinear control
  systems on time scales.
\newblock {\em Proc. Estonian Acad. Sci. Phys. Math.}, 56(3):264--282, 2007.

\bibitem{Zbig06}
Z.~Bartosiewicz and E.~Paw{\l}uszewicz.
\newblock Realizations of linear control systems on time scales.
\newblock {\em Control Cybernet.}, 35(4):769--786, 2006.

\bibitem{BG}
M.~Bohner and G.~S. Guseinov.
\newblock The convolution on time scales.
\newblock {\em Abstr. Appl. Anal.}, pages Art. ID 58373, 24, 2007.

\bibitem{Bh}
M.~Bohner and A.~Peterson.
\newblock {\em Dynamic equations on time scales. An introduction with applications}.
\newblock Birkh\"auser Boston Inc., Boston, MA, 2001.

\bibitem{HP}
R.~J. Higgins and A.~Peterson.
\newblock Cauchy functions and {T}aylor's formula for time scales {$\Bbb T$}.
\newblock In {\em Proceedings of the Sixth International Conference on
  Difference Equations}, pages 299--308, Boca Raton, FL, 2004. CRC.

\bibitem{Jacobsohn}
G.~Jacobsohn.
\newblock A discrete {T}aylor series method for the solution of two-point
  boundary-value problems.
\newblock {\em J. Franklin Inst.}, 338(1):61--68, 2001.

\bibitem{MP2}
D.~Mozyrska and E.~Paw{\l}uszewicz.
\newblock Functional series on time scales.
\newblock {\em Int. J. Math. Stat.}, 2(S08):94--105, 2008.

\bibitem{MP3}
D.~Mozyrska and E.~Paw{\l}uszewicz.
\newblock Remarks on delta and nabla generalized polynomials and polynomial
  series.
\newblock Technical report, Bialystok Technical University, 2008.

\bibitem{Ozkan}
U.~M. Ozkan and H.~Yildirim.
\newblock Steffensen's integral inequality on time scales.
\newblock {\em J. Inequal. Appl.}, pages Art. ID 46524, 10, 2007.

\bibitem{Para}
P.~N. Paraskevopoulos, A.~S. Tsirikos, and K.~G. Arvanitis.
\newblock New {T}aylor series approach to state-space analysis and optimal
  control of linear systems.
\newblock {\em J. Optim. Theory Appl.}, 71(2):315--340, 1991.

\bibitem{Sheng1}
J.~W. Rogers, Jr. and Q.~Sheng.
\newblock Notes on the diamond-{$\alpha$} dynamic derivative on time scales.
\newblock {\em J. Math. Anal. Appl.}, 326(1):228--241, 2007.

\bibitem{Sadek}
I.~Sadek, T.~Abualrub, and M.~Abukhaled.
\newblock A computational method for solving optimal control of a system of
  parallel beams using {L}egendre wavelets.
\newblock {\em Math. Comput. Modelling}, 45(9-10):1253--1264, 2007.

\bibitem{Sheng}
Q.~Sheng.
\newblock Hybrid approximations via second order combined dynamic derivatives
  on time scales.
\newblock {\em Electron. J. Qual. Theory Differ. Equ.}, pages No. 17, 13 pp.
  (electronic), 2007.

\bibitem{SFHD}
Q.~Sheng, M.~Fadag, J.~Henderson, and J.~M. Davis.
\newblock An exploration of combined dynamic derivatives on time scales and
  their applications.
\newblock {\em Nonlinear Anal. Real World Appl.}, 7(3):395--413, 2006.

\end{thebibliography}
\end{document}